\numberwithin{equation}{section}
\numberwithin{figure}{section}
\theoremstyle{plain}
\newtheorem{thm}{\protect\theoremname}[section]
  \theoremstyle{plain}
  \newtheorem{algorithm}[thm]{\protect\algorithmname}
  \theoremstyle{remark}
  \newtheorem{rem}[thm]{\protect\remarkname}
  \theoremstyle{plain}
  \newtheorem{prop}[thm]{\protect\propositionname}
  \theoremstyle{definition}
  \newtheorem{defn}[thm]{\protect\definitionname}
  \theoremstyle{plain}
  \newtheorem{cor}[thm]{\protect\corollaryname}
  \theoremstyle{plain}
  \newtheorem{lem}[thm]{\protect\lemmaname}
  \theoremstyle{definition}
  \newtheorem{example}[thm]{\protect\examplename}
  \theoremstyle{remark}
  \newtheorem*{acknowledgement*}{\protect\acknowledgementname}
\newcommand{\bspan}{\mbox{\rm span}}
  \providecommand{\acknowledgementname}{Acknowledgement}
  \providecommand{\algorithmname}{Algorithm}
  \providecommand{\corollaryname}{Corollary}
  \providecommand{\definitionname}{Definition}
  \providecommand{\examplename}{Example}
  \providecommand{\lemmaname}{Lemma}
  \providecommand{\propositionname}{Proposition}
  \providecommand{\remarkname}{Remark}
\providecommand{\theoremname}{Theorem}
\begin{document}
\title[Projection onto intersection of affine spaces]{Accelerating the alternating projection algorithm for the case of affine subspaces using supporting hyperplanes}

\subjclass[2010]{11D04, 90C59, 47J25, 47A46, 47A50, 52A20, 41A50.}
\begin{abstract}
The von Neumann-Halperin method of alternating projections converges
strongly to the projection of a given point onto the intersection
of finitely many closed affine subspaces. We propose acceleration
schemes making use of two ideas: Firstly, each projection onto an
affine subspace identifies a hyperplane of codimension 1 containing
the intersection, and secondly, it is easy to project onto a finite
intersection of such hyperplanes. We give conditions for which our
accelerations converge strongly. Finally, we perform numerical experiments
to show that these accelerations perform well for a matrix model updating
problem. 
\end{abstract}

\author{C.H. Jeffrey Pang}

\curraddr{Department of Mathematics\\ 
National University of Singapore\\ 
Block S17 08-11\\ 
10 Lower Kent Ridge Road\\ 
Singapore 119076 }

\email{matpchj@nus.edu.sg}

\date{\today{}}

\keywords{best approximation problem, alternating projections, supporting hyperplanes.}

\maketitle
\tableofcontents{}

\section{Introduction}

Let $X$ be a (real) Hilbert space, and let $M_{1}$, $M_{2}$, $\dots$,
$M_{k}$ be a finite number of closed linear subspaces with $M:=\cap_{l=1}^{k}M_{l}$.
For any closed subspace $N$ of $X$, let $P_{N}$ denote the orthogonal
projection onto $N$. The von Neumann-Halperin method of alternating
projections, or MAP for short, is an iterative algorithm for determining
the best approximation $P_{M}x$, the projection of $x$ onto $M$.
We recall their theorem on the strong convergence of the MAP below.
\begin{thm}
\label{thm:1st-MAP}(von Neumann \cite{Neumann50} for $k=2$, Halperin
\cite{Halperin62} for $k\geq2$) Let $M_{1}$, $M_{2}$, $\dots$,
$M_{k}$ be closed subspaces in the Hilbert space $X$ and let $M:=\cap_{l=1}^{k}M_{l}$.
Then 
\begin{equation}
\lim_{n\to\infty}\|(P_{M_{k}}P_{M_{k-1}}\cdots P_{M_{1}})^{n}x-P_{M}x\|=0\mbox{ for all }x\in X.\label{eq:Halperin}
\end{equation}

\end{thm}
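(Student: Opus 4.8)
The plan is to put $T:=P_{M_k}P_{M_{k-1}}\cdots P_{M_1}$, a bounded linear operator with $\|T\|\le1$ because each $P_{M_l}$ is a nonexpansive linear map, and to prove $T^nx\to P_Mx$ for all $x\in X$. The first step is the elementary identity $\operatorname{Fix}(T)=M$. Indeed $M\subseteq M_l$ for every $l$, so $P_{M_l}y=y$ for $y\in M$ and hence $Ty=y$; conversely, if $Tx=x$ then $\|x\|=\|Tx\|\le\|P_{M_{k-1}}\cdots P_{M_1}x\|\le\|P_{M_{k-2}}\cdots P_{M_1}x\|\le\cdots\le\|P_{M_1}x\|\le\|x\|$ must be a chain of equalities, and since an orthogonal projection preserves the norm of a vector only if that vector lies in its range, peeling the projections off from the innermost one outward gives $x\in M_l$ for every $l$, i.e. $x\in M$. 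Next I would reduce to $M=\{0\}$: from $\langle P_{M_l}u,w\rangle=\langle u,w\rangle=0$ for $u\in M^{\perp}$ and $w\in M$ one gets $P_MP_{M_l}=P_{M_l}P_M=P_M$, whence $P_MT=TP_M=P_M$; thus $X=M\oplus M^{\perp}$ reduces $T$, $T|_M=\mathrm{id}$, and $T|_{M^{\perp}}$ is the analogous product of the orthogonal projections of $M^{\perp}$ onto the subspaces $M_l\cap M^{\perp}$, which have trivial intersection. Writing $x=P_Mx+(x-P_Mx)$, it suffices to show $T^nx\to0$ when $\bigcap_{l}M_l=\{0\}$.

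Assume henceforth $\bigcap_l M_l=\{0\}$. The second step establishes asymptotic regularity and weak convergence, uniformly in $k$. Fix $x$, set $x_n:=T^nx$, and within one sweep put $x_n^{(0)}:=x_n$, $x_n^{(j)}:=P_{M_j}x_n^{(j-1)}$, so $x_n^{(k)}=x_{n+1}$. Since each $P_{M_j}$ is an orthogonal projection, $\|x_n^{(j-1)}\|^2-\|x_n^{(j)}\|^2=\|x_n^{(j-1)}-x_n^{(j)}\|^2$, and summing over $j$ gives $\|x_n\|^2-\|x_{n+1}\|^2=\sum_{j=1}^{k}\|x_n^{(j-1)}-x_n^{(j)}\|^2$. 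Hence $(\|x_n\|)$ is nonincreasing, so it converges, the right-hand side tends to $0$, and $\|x_n^{(j-1)}-x_n^{(j)}\|\to0$ for each $j$; telescoping gives $\|x_n-x_n^{(j)}\|\to0$ for each $j$ — in particular $\|x_n-x_{n+1}\|\to0$ — and then, by the triangle inequality and nonexpansiveness of $P_{M_j}$, $\|x_n-P_{M_j}x_n\|\to0$. If $x_{n_i}\rightharpoonup z$ along a subsequence of the bounded sequence $(x_n)$, then $P_{M_j}x_{n_i}\rightharpoonup z$ as well, and since $P_{M_j}x_{n_i}\in M_j$ with $M_j$ weakly closed, $z\in M_j$ for every $j$, so $z\in\bigcap_lM_l=\{0\}$. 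Thus $x_n\rightharpoonup0$.

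The third step — promoting this to $\|x_n\|\to0$, i.e. strong convergence — is the crux, and I expect it to be the main obstacle: the product-of-projections structure must now be used in an essential way, since the convergence in \eqref{eq:Halperin} can be arbitrarily slow (the $M_l$ need not satisfy any linear-regularity/angle condition), so no argument relying solely on asymptotic regularity and weak convergence can succeed. For $k=2$ there is a clean completion. The operator $A:=P_{M_1}P_{M_2}P_{M_1}$ is self-adjoint with $0\le A\le I$, and $\operatorname{Fix}(A)=M_1\cap M_2=\{0\}$ by the same norm-peeling argument; hence, applying the spectral theorem and bounded convergence to the functions $t\mapsto t^n$, which converge pointwise on $\sigma(A)\subseteq[0,1]$ to the indicator of $\{1\}$, we get $A^nx\to0$ for every $x$. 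Using $P_{M_1}^2=P_{M_1}$ one checks $A^n=P_{M_1}(P_{M_2}P_{M_1})^n$, while, by the Pythagorean identity and nonexpansiveness of $P_{M_2}P_{M_1}$, $\|(I-P_{M_1})(P_{M_2}P_{M_1})^nx\|^2=\|(P_{M_2}P_{M_1})^nx\|^2-\|P_{M_1}(P_{M_2}P_{M_1})^nx\|^2\le\|(P_{M_2}P_{M_1})^nx\|^2-\|(P_{M_2}P_{M_1})^{n+1}x\|^2\to0$ because $\|(P_{M_2}P_{M_1})^nx\|$ converges. Adding the pieces, $(P_{M_2}P_{M_1})^nx=A^nx+(I-P_{M_1})(P_{M_2}P_{M_1})^nx\to0$, which settles the case $k=2$.

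For general $k$ I would proceed by induction on $k$. Granting the theorem for any $k-1$ closed subspaces, the $(k-1)$-fold product $R:=P_{M_{k-1}}\cdots P_{M_1}$ satisfies $R^mx\to P_Nx$ with $N:=M_1\cap\cdots\cap M_{k-1}$, and since $T=P_{M_k}R$ it remains to show $(P_{M_k}R)^nx\to P_{M_k\cap N}x=P_Mx=0$. This transfer is the delicate point and is the essence of Halperin's contribution: since the iterate $(P_{M_k}R)^n$ contains only a single copy of $R$ between consecutive copies of $P_{M_k}$, one cannot invoke $R^m\to P_N$ directly; instead one must feed the asymptotic regularity and weak convergence of the second step, together with the convergence $R^m\to P_N$ and the idempotency of $P_{M_k}$, into an estimate in the spirit of the $k=2$ computation above. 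Making this inductive step rigorous — equivalently, reproducing Halperin's argument for arbitrary $k$ — is where essentially all of the remaining technical work lies.
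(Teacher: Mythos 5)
The paper itself offers no proof of Theorem \ref{thm:1st-MAP}; it is quoted from von Neumann and Halperin, so your attempt can only be measured against what a complete proof requires. Much of what you write is correct: the identification $\mathrm{Fix}(T)=M$ by norm-peeling, the reduction to $M=\{0\}$ via $P_MT=TP_M=P_M$, the telescoped Pythagorean identity giving asymptotic regularity and $\|x_n-P_{M_j}x_n\|\to0$, the weak convergence $x_n\rightharpoonup0$, and the spectral-theorem completion for $k=2$ are all sound. But there is a genuine gap: for $k\ge3$ --- which is precisely Halperin's contribution and the case this paper needs, since it works with $k$ subspaces throughout --- you only sketch an induction and then concede that making it rigorous ``is where essentially all of the remaining technical work lies.'' That self-assessment is accurate: the transfer from $R:=P_{M_{k-1}}\cdots P_{M_1}$ to $P_{M_k}R$ does not follow from your $k=2$ computation, because $R$ is not an orthogonal projection (not self-adjoint, not idempotent), so neither the Pythagorean identity $\|(I-R)y\|^2=\|y\|^2-\|Ry\|^2$ nor the functional calculus for $P_{M_k}RP_{M_k}$ is available, and, as you yourself note, asymptotic regularity plus weak convergence alone cannot force $\|x_n\|\to0$. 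The theorem as stated therefore remains unproven.

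The missing ingredient is supplied by the machinery this paper builds for its own results. The MAP is the special case of Algorithm \ref{alg:accel} in which $J_i=\{i\}$, so that the second projection in each iteration is trivial, and strong convergence for arbitrary $k$ then follows from Proposition \ref{prop:4-condns}: conditions (1), (3) and (4) are immediate for cyclic projections, while condition (2) is obtained by writing $x_i-x_0$ as a sum of increments lying in the various $M_l^{\perp}$ and applying the Boyle--Dykstra sum-of-squares lemma (Lemma \ref{lem:Sum-of-squares}) to extract a subsequence with $\limsup_t\langle x_{i_t}-x_0,x_{i_t}\rangle\le0$, exactly as in the paper's Version 2 convergence theorem with $K=1$. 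The Banach--Saks and Kadec--Klee steps inside Proposition \ref{prop:4-condns} then upgrade the weak convergence you already have to strong convergence. That sum-of-squares estimate, not a refinement of the spectral argument, is what your third step is missing for $k\ge3$.
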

In the case where $k=2$, this result was rediscovered numerous times.

The method of alternating projections, as suggested in the formula
\eqref{eq:Halperin}, guarantees convergence to the projection $P_{M}x$,
but the convergence is slow in practice. Various acceleration schemes
have been studied in \cite{GPR67,GK89,BDHP03}. An identity for the
convergence of the method of alternating projections in the case of
linear subspaces is presented in \cite{Xu_Zikatanov02}.

We remark that the Boyle-Dykstra Theorem \cite{BD86} generalizes
the strong convergence to the projection in Theorem \ref{thm:1st-MAP}
to Dykstra's algorithm \cite{Dykstra83}, where the $M_{l}$ do not
have to be linear subspaces.

When the sets $M_{l}$ are not linear subspaces, a simple example
using a halfspace and a line in $\mathbb{R}^{2}$ shows that the method
of alternating projections may not converge to the projection $P_{M}x$.
Nevertheless, the method of alternating projections is still useful
for the SIP (Set Intersection Problem). When $M_{1}$, $M_{2}$, $\dots$,
$M_{k}$ is a finite number of closed (not necessarily convex) subsets
of a Hilbert space $X$, the SIP is the problem of finding a point
in $M:=\cap_{l=1}^{k}M_{l}$, i.e.,

\begin{equation}
\mbox{(SIP):}\quad\mbox{Find }x\in M:=\bigcap_{l=1}^{k}M_{l}\mbox{, where }M\neq\emptyset.\label{eq:SIP}
\end{equation}
An acceleration of the method of alternating projections for the case
where each $M_{l}$ were closed convex sets (but not necessarily subspaces)
was studied in \cite{cut_Pang12} and improved in \cite{better_alg}.
The idea there, named as the SHQP strategy (Supporting Halfspaces
and Quadratic Programming) was to store each of the halfspace produced
by the projection process, and use quadratic programming to project
onto an intersection of a reasonable number of these halfspaces. 

In the particular case of affine spaces, the SHQP strategy is even
easier to state and implement: Consider an affine space $M_{1}$ of
a Hilbert space $X$. First, the projection of a point $x_{0}$ onto
$M_{1}$ identifies the hyperplane of codimension 1 
\begin{equation}
\{x:\left\langle x_{0}-P_{M_{1}}x_{0},x\right\rangle =\left\langle x_{0}-P_{M_{1}}x_{0},P_{M_{1}}x_{0}\right\rangle \}\label{eq:1st-hyperplane}
\end{equation}
as a superset of $M_{1}$. Next, it is easy to project any point onto
the intersection of finitely many hyperplanes of the form \eqref{eq:1st-hyperplane}. 

A problem with many similarities but separate considerations and techniques
is that of \cite{NeedellTropp14}. In that paper, a randomized block
Kaczmarz method is analyzed.

\subsection{Contributions of this paper}

The techniques of \cite{cut_Pang12} gives additional assumptions
so that the SHQP strategy converges weakly to $P_{M}(x)$. The question
we ask in this paper is whether the SHQP strategy converges strongly
to the projection $P_{M}x$ in the case when $M:=\cap_{i=1}^{k}M_{i}$
and $M_{1}$, $M_{2}$, $\dots$, $M_{k}$ is a finite number of closed
affine subspaces like in the von Neumann-Halperin Theorem. We propose
Algorithm \ref{alg:accel}, which is based on a naive implementation
of the SHQP strategy. Based on the additional structure of affine
spaces, we propose Algorithm \ref{alg:accel2}, which is effective
when one of the affine subspaces is easy to project onto.

We prove that Algorithms \ref{alg:accel} and \ref{alg:accel2} converge
strongly to $P_{M}(x)$ under some assumptions in Section \ref{sec:Strong-conv}.
We also give reasons (Example \ref{exa:p-needed}) to explain why
these additional conditions cannot be removed. Our proof is adapted
from the proof of the Boyle-Dykstra Theorem \cite{BD86} on the strong
convergence of Dykstra's Algorithm \cite{Dykstra83} in the manner
presented in \cite{EsRa11}.

Next, we examine an implementation of our acceleration on a Matrix
Model Updating Problem (MMUP) from \cite[Section 6.2]{EsRa11}, who
in turn cited \cite{Datta_Sarkissian01,Moreno_Datta_Raydan09}. The
numerical experiments show the effectiveness of our algorithms.

\subsection{Notation}

We shall assume that $X$ is a Hilbert space with the inner product
$\left\langle \cdot,\cdot\right\rangle $ and norm $\|\cdot\|$.

\section{Algorithms}

In this section, we propose Algorithms \ref{alg:accel} and \ref{alg:accel2}
that seek to find the projection of a point onto the intersection
of a finite number of closed linear subspaces. It is clear to see
that our algorithms apply for affine spaces with nonempty intersection
as well (a fact we use in our experiments in Section \ref{sec:experiments}),
since a translation can reduce the problem to involving only linear
subspaces. 

We begin with our first algorithm.
\begin{algorithm}
\label{alg:accel}(Accelerated Projections) Let $M_{1}$, $M_{2}$,
$\dots$, $M_{k}$ be a finite number of closed linear subspaces in
a Hilbert space $X$. For a starting point $x_{0}\in X$, this algorithm
seeks to find $P_{M}(x_{0})$, where $M:=\cap_{l=1}^{k}M_{l}$.

\textbf{Step 0:} Set $i=0$.

\textbf{Step 1:} Project $x_{i}$ onto $M_{l_{i}}$, where $l_{i}\in\{1,\dots,k\}$,
to get $\tilde{x}_{i}$. This projection identifies a hyperplane $H_{i}:=\{x:\langle a_{i},x\rangle=b_{i}\}$,
where $a_{i}=x_{i}-P_{M_{l_{i}}}(x_{i})\in X$ and $b_{i}=\langle a_{i},P_{M_{l_{i}}}(x_{i})\rangle\in\mathbb{R}$,
such that $M\subset M_{l_{i}}\subset H_{i}$. (When $a_{i}=0$, then
$H_{i}=X$.)

\textbf{Step 2:} Choose $J_{i}\subset\{1,\dots,i\}$ such that $i\in J_{i}$,
and project $\tilde{x}_{i}$ onto $\tilde{H}_{i}:=\cap_{j\in J_{i}}H_{j}$
to get $x_{i+1}$. In short:
\begin{equation}
x_{i}\xrightarrow{P_{M_{l_{i}}}(\cdot)}\tilde{x}_{i}\xrightarrow{P_{\tilde{H}_{i}}(\cdot)}x_{i+1},\mbox{ with }M\subset\tilde{H}_{i}\mbox{ and }M\subset M_{l_{i}}\mbox{ for all }i.\label{eq:nested-projs-1}
\end{equation}

\textbf{Step 3: }The algorithm ends if some convergence criterion
is met. Otherwise, set $i\leftarrow i+1$ and return to step 1.

\end{algorithm}
\begin{rem}
(Limit points of $\{x_{i}\}$) We can easily figure out that 
\[
x_{i}-\tilde{x}_{i}\in M_{l_{i}}^{\perp}\subset M^{\perp}\mbox{ and }\tilde{x}_{i}-x_{i+1}\in\tilde{H}_{i}^{\perp}\subset M^{\perp},
\]
from which we can deduce that $x_{0}-x_{i}\in M^{\perp}$. Suppose
$\{x_{i}\}_{i}$ converges (weakly or strongly) to $\bar{x}$. We
can then deduce that $x_{0}-\bar{x}\in\overline{[\sum_{l=1}^{k}M_{l}^{\perp}]}=M^{\perp}$.
Furthermore, if $\bar{x}\in M$, the KKT conditions imply that $\bar{x}=P_{M}(x_{0})$.
For the rest of this paper, we will concentrate our efforts in showing
that in our algorithms, the iterates $\{x_{i}\}$ converge strongly
to $P_{M}(x_{0})$. 
\end{rem}
The next two easy results are preparation for Algorithm \ref{alg:accel2},
which is an improvement of Algorithm \ref{alg:accel} when one of
the linear subspaces, say $M_{1}$, is easy to project onto. Something
similar was done in \cite{Pierra84,BausCombKruk06}, where analytic
formulas for the projection onto an affine space and a halfspace were
derived.
\begin{prop}
\label{prop:proj-intersection-of-2}(Projection onto intersection
of affine spaces) Suppose $M$ and $\tilde{H}$ are linear subspaces
of a Hilbert space $X$ such that $\tilde{H}^{\perp}\subset M$. Then
$P_{M\cap\tilde{H}}(\cdot)=P_{\tilde{H}}\circ P_{M}(\cdot)$.\end{prop}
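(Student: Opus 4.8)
The plan is to verify directly that the map $x\mapsto P_{\tilde H}(P_M(x))$ satisfies the defining variational characterization of the orthogonal projection onto the closed subspace $M\cap\tilde H$. Recall that for a closed subspace $N\subseteq X$ and a point $x\in X$, one has $p=P_N(x)$ if and only if $p\in N$ and $x-p\in N^\perp$. Since $M$ and $\tilde H$ are closed subspaces, $M\cap\tilde H$ is too, so $P_{M\cap\tilde H}$ is well defined; it therefore suffices to fix $x\in X$, put $q:=P_{\tilde H}(P_M(x))$, and check the two conditions $q\in M\cap\tilde H$ and $x-q\in(M\cap\tilde H)^\perp$.

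First I would establish the membership $q\in M\cap\tilde H$. The inclusion $q\in\tilde H$ is immediate from the definition of $P_{\tilde H}$. To see $q\in M$, write $q=P_M(x)-\big(P_M(x)-q\big)$: here $P_M(x)\in M$, while $P_M(x)-q=P_M(x)-P_{\tilde H}(P_M(x))\in\tilde H^\perp\subseteq M$ by hypothesis, so $q\in M$ because $M$ is a subspace. This is the only place the hypothesis $\tilde H^\perp\subseteq M$ enters, and it is exactly what guarantees that the second projection does not leave $M$.

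Next I would check orthogonality. Decomposing
\[
x-q=\big(x-P_M(x)\big)+\big(P_M(x)-q\big),
\]
the first summand lies in $M^\perp$ and hence is orthogonal to $M\cap\tilde H\subseteq M$, while the second summand lies in $\tilde H^\perp$ and hence is orthogonal to $M\cap\tilde H\subseteq\tilde H$; thus $x-q$ is orthogonal to $M\cap\tilde H$. Together with the first step and the variational characterization, this gives $q=P_{M\cap\tilde H}(x)$, which is the asserted identity.

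I do not expect a genuine obstacle: every step reduces to the orthogonal decomposition $X=N\oplus N^\perp$ for one of the subspaces $M$ or $\tilde H$. The only point needing care is the membership $q\in M$, where the hypothesis $\tilde H^\perp\subseteq M$ is indispensable — without it $P_{\tilde H}$ may map a point of $M$ outside $M$, and the identity fails.
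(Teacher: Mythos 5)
Your proof is correct and follows essentially the same route as the paper: both first show $P_{\tilde H}(P_M(x))\in M\cap\tilde H$ using $\tilde H^\perp\subset M$, and both then use the decomposition $x-q=(x-P_M(x))+(P_M(x)-q)$ with the two summands in $M^\perp$ and $\tilde H^\perp$ respectively (the paper phrases this as two applications of the invariance of $P_{M\cap\tilde H}$ under perturbations in $(M\cap\tilde H)^\perp$, while you verify the variational characterization in one step). No gaps.
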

\begin{proof}
For $x\in X$, let $y:=P_{M}(x)$. Then $y-P_{\tilde{H}}(y)\in\tilde{H}^{\perp}\subset M$.
Since $y\in M$, we have $P_{\tilde{H}}\circ P_{M}(x)=P_{\tilde{H}}(y)\in M$.
It is also clear that $P_{\tilde{H}}\circ P_{M}(x)\in\tilde{H}$,
so $P_{\tilde{H}}\circ P_{M}(x)\in M\cap\tilde{H}$. 

Next, since $y-x\in M^{\perp}\subset[\tilde{H}\cap M]^{\perp}$, we
have 
\[
P_{\tilde{H}\cap M}(x)=P_{\tilde{H}\cap M}(y)=P_{\tilde{H}}(y)=P_{\tilde{H}}\circ P_{M}(x).
\]
Since the above holds for all $x\in X$, we are done. 
\end{proof}
\begin{figure}[h]
\includegraphics[scale=0.5]{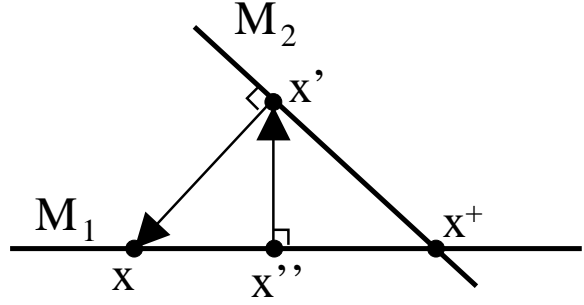}

\caption{\label{fig:aff} This figure illustrates the two dimensional space
spanned by $x$, $x^{\prime}$ and $x^{\prime\prime}$ in Proposition
\ref{prop:2-subspaces}. }
\end{figure}

\begin{prop}
\label{prop:2-subspaces}(2 subspaces) Let $M_{1}$ and $M_{2}$ be
two linear subspaces of a Hilbert space $X$. Suppose $x\in M_{1}$
and $x^{\prime}=P_{M_{2}}(x)$ and $x^{\prime\prime}=P_{M_{1}}(x^{\prime})$.
Then the hyperplane $H:=\left\{ \tilde{x}:\langle x-x^{\prime\prime},\tilde{x}\rangle=\left\langle x-x^{\prime\prime},x^{+}\right\rangle \right\} ,$
where $x^{+}:=x+\frac{\|x-x^{\prime}\|^{2}}{\|x-x^{\prime\prime}\|^{2}}(x^{\prime\prime}-x)$
(See Figure \ref{fig:aff}), is such that $M_{1}\cap M_{2}\subset H$.
Moreover, $x-x^{\prime\prime}$, the only vector in $H^{\perp}$ up
to a scalar multiple, satisfies $x-x^{\prime\prime}\in M_{1}$.\end{prop}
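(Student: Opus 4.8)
The plan is to treat the two assertions separately. The claim that $x-x''$ lies in $M_{1}$ is immediate: by hypothesis $x\in M_{1}$, and $x''=P_{M_{1}}(x')\in M_{1}$, so $x-x''\in M_{1}$ because $M_{1}$ is a linear subspace. Since $H$ is (for $x\neq x''$) a hyperplane of the form $\{\tilde x:\langle a,\tilde x\rangle=b\}$ with $a=x-x''\neq 0$, its orthogonal complement $H^{\perp}$ is exactly the line spanned by $x-x''$, which gives the ``only vector up to a scalar multiple'' statement.

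For the inclusion $M_{1}\cap M_{2}\subset H$, I would first simplify the right-hand side of the equation defining $H$. Writing $x^{+}=x+t(x''-x)$ with $t=\|x-x'\|^{2}/\|x-x''\|^{2}$ and using $\langle x-x'',x''-x\rangle=-\|x-x''\|^{2}$, one gets $\langle x-x'',x^{+}\rangle=\langle x-x'',x\rangle-\|x-x'\|^{2}$. Hence a point $z$ lies in $H$ if and only if $\langle x-x'',x-z\rangle=\|x-x'\|^{2}$, so it suffices to establish this identity for every $z\in M_{1}\cap M_{2}$.

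To do that I would split $x-x''=(x-x')+(x'-x'')$ and evaluate the two pieces using the defining properties of the projections. Since $x''=P_{M_{1}}(x')$ we have $x'-x''\in M_{1}^{\perp}$, while $x-z\in M_{1}$ (both $x$ and $z$ are in $M_{1}$), so $\langle x'-x'',x-z\rangle=0$. For the other piece, $x'=P_{M_{2}}(x)$ gives $x-x'\in M_{2}^{\perp}$; since $z\in M_{2}$ this kills $\langle x-x',z\rangle$, and since $x'\in M_{2}$ it also gives $\langle x-x',x'\rangle=0$, whence $\langle x-x',x\rangle=\|x-x'\|^{2}$. Adding up yields $\langle x-x'',x-z\rangle=\|x-x'\|^{2}$, as needed. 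The only point requiring a comment is the degenerate case $x=x''$, where the scalar defining $x^{+}$ is $0/0$; there one takes $H=X$, consistent with the convention used in Algorithm \ref{alg:accel}, and the statement is trivial. I do not expect any genuine obstacle here — the argument is a short orthogonality computation once the right-hand side of the hyperplane equation is rewritten.
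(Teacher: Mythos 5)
Your proof is correct, but it takes a different route from the paper's. The paper works with the two hyperplanes generated by the projection steps, $H^{\prime}:=\{\tilde{x}:\langle x-x^{\prime},\tilde{x}-x^{\prime}\rangle=0\}\supset M_{2}$ and $H^{\prime\prime}:=\{\tilde{x}:\langle x^{\prime}-x^{\prime\prime},\tilde{x}-x^{\prime\prime}\rangle=0\}\supset M_{1}$, and then argues by elementary plane geometry (in the two-dimensional span of $x,x^{\prime},x^{\prime\prime}$, see Figure \ref{fig:aff}) that $x^{+}=P_{H^{\prime}\cap H^{\prime\prime}}(x^{\prime\prime})$, so that $M_{1}\cap M_{2}\subset H^{\prime}\cap H^{\prime\prime}\subset H$. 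You instead rewrite membership in $H$ as the scalar identity $\langle x-x^{\prime\prime},x-z\rangle=\|x-x^{\prime}\|^{2}$ and verify it directly for $z\in M_{1}\cap M_{2}$ via the decomposition $x-x^{\prime\prime}=(x-x^{\prime})+(x^{\prime}-x^{\prime\prime})$ with $x-x^{\prime}\in M_{2}^{\perp}$ and $x^{\prime}-x^{\prime\prime}\in M_{1}^{\perp}$; the orthogonality facts used are the same, but you bypass the intermediate geometric claim entirely. What each buys: the paper's argument explains where $x^{+}$ comes from (it is the projection of $x^{\prime\prime}$ onto the intersection of the two cuts, i.e.\ the deepest point reachable from them, which motivates the definition of $H_{i}$ in Algorithm \ref{alg:accel2}), while yours is a short, self-contained algebraic verification that does not lean on a figure and handles the degenerate case $x=x^{\prime\prime}$ explicitly. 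Both are valid; yours is arguably the more airtight write-up, the paper's the more illuminating one.
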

\begin{proof}
By the properties of projection, we have $M_{2}\subset H^{\prime}:=\{\tilde{x}:\langle x-x^{\prime},\tilde{x}-x^{\prime}\rangle=0\}$
and $M_{1}\subset H^{\prime\prime}:=\{\tilde{x}:\langle x^{\prime}-x^{\prime\prime},\tilde{x}-x^{\prime\prime}\rangle=0\}$.
By elementary geometry (See Figure \ref{fig:aff}), we can figure
out that the projection of $x^{\prime\prime}$ onto $H^{\prime}\cap H^{\prime\prime}$
is $x^{+}$. Thus $M_{1}\cap M_{2}\subset H^{\prime}\cap H^{\prime\prime}\subset H$,
from which the first part follows. The last sentence of the result
is clear.
\end{proof}
From the preparations in Propositions \ref{prop:proj-intersection-of-2}
and \ref{prop:2-subspaces}, we propose the following algorithm.
\begin{algorithm}
\label{alg:accel2}(Accelerated Projections 2) Let $M_{1}$, $M_{2}$,
$\dots$, $M_{k}$ be a finite number of closed linear subspaces in
a Hilbert space $X$. Suppose $M_{1}$ is easy to project onto. For
a starting point $x_{0}\in M_{1}$, this algorithm seeks to find $P_{M}(x_{0})$,
where $M:=\cap_{l=1}^{k}M_{l}$.

\textbf{Step 0:} Set $i=0$.

\textbf{Step 1:} Project $x_{i}$ onto $M_{l_{i}}$, where $l_{i}\in\{2,\dots,k\}$,
to get $x_{i}^{\prime}$. Project $x_{i}^{\prime}$ onto $M_{1}$
to get $x_{i}^{\prime\prime}$. This projection identifies a hyperplane
$H_{i}:=\{x:\langle a_{i},x\rangle=b_{i}\}$, where $a_{i}=x_{i}-x_{i}^{\prime\prime}\in M_{1}$
and $b_{i}=\langle a_{i},x_{i}+\frac{\|x_{i}-x_{i}^{\prime}\|^{2}}{\|x_{i}-x_{i}^{\prime\prime}\|^{2}}(x_{i}^{\prime\prime}-x_{i})\rangle\in\mathbb{R}$,
such that $M_{1}\cap M_{l_{i}}\subset H_{i}$.

\textbf{Step 2:} Choose $J_{i}\subset\{1,\dots,i\}$ such that $i\in J_{i}$,
and set $x_{i+1}=P_{\tilde{H}_{i}}(x_{i})$, which also equals $P_{\tilde{H}_{i}}(x_{i})$
since $x_{i}-x_{i}^{\prime\prime}\in H_{i}^{\perp}\subset\tilde{H}_{i}^{\perp}$.
One has 
\begin{equation}
x_{i}\xrightarrow{P_{M_{l_{i}}}(\cdot)}x_{i}^{\prime}\xrightarrow{P_{M_{1}}(\cdot)}x_{i}^{\prime\prime}\xrightarrow{P_{\tilde{H}_{i}}(\cdot)=P_{M_{1}\cap\tilde{H}_{i}}(\cdot)}x_{i+1},\label{eq:nested-projs}
\end{equation}
with $M\subset\tilde{H}_{i}$ and $M\subset M_{l_{i}}$ for all $i$.

\textbf{Step 3: }The algorithm ends if some convergence criterion
is met. Otherwise, set $i\leftarrow i+1$ and return to step 1.

\end{algorithm}
If $x_{0}\notin M_{1}$ in Algorithm \ref{alg:accel2}, we can start
the algorithm with $x_{0}\leftarrow P_{M_{1}}(x_{0})$ instead. It
is clear that $P_{M}(x_{0})=P_{M}(P_{M_{1}}(x_{0}))$. So if the algorithm
with the adjusted starting point converges to $P_{M}(P_{M_{1}}(x_{0}))$,
then it converges to $P_{M}(x_{0})$. 

We show that $x_{i}\in M_{1}$ for all $i$ and also explain why $P_{\tilde{H}_{i}}(x_{i}^{\prime\prime})=P_{M_{1}\cap\tilde{H}_{i}}(x_{i}^{\prime\prime})$
in \eqref{eq:nested-projs}. The assumptions state that $x_{0}\in M_{1}$.
Suppose $x_{i}\in M_{1}$. Then 
\[
x_{i+1}=P_{\tilde{H}_{i}}(x_{i})=P_{\tilde{H}_{i}}\circ P_{M_{1}}(x_{i})=P_{M_{1}\cap\tilde{H}_{i}}(x_{i}).
\]
The last equation comes from applying the fact that $\tilde{H}_{i}^{\perp}=\sum_{j\in J_{i}}H_{j}^{\perp}$
and $H_{j}^{\perp}\subset M_{1}$ for all $j$ from Proposition \ref{prop:2-subspaces}
onto Proposition \ref{prop:proj-intersection-of-2}. The formula \eqref{eq:nested-projs}
is a useful tool in the analysis of Algorithm \ref{alg:accel2}.

The linear subspace $M_{1}\cap\tilde{H}_{i}$ in Algorithm \ref{alg:accel2}
has a larger codimension than the $\tilde{H_{i}}$ in Algorithm \ref{alg:accel}.
Thus operations involving the projection $P_{M_{1}\cap\tilde{H}_{i}}(\cdot)$
can be expected to get iterates closer to $M$ than $P_{\tilde{H}_{i}}(\cdot)$.
So when $M_{1}$ is easy to project onto, we expect Algorithm \ref{alg:accel2}
to converge in fewer iterations and less time than Algorithm \ref{alg:accel}.
Such a condition is met for the example we present in Section \ref{sec:experiments},
and we will see that Algorithm \ref{alg:accel2} is indeed better.
Another factor that may play a role in the fast convergence observed
is that $M_{1}$ has a larger codimension than the other subspaces.

\section{\label{sec:Strong-conv}Strong convergence results}

In this section, we prove the strong convergence results for Algorithms
\ref{alg:accel} and \ref{alg:accel2}. 

We recall some easy results on the projection onto a closed linear
subspace and Fej\'{e}r monotonicity.
\begin{thm}
\label{thm:1-attractive}(Orthogonal projection onto linear subspaces)
Let $X$ be a Hilbert space, and suppose $T:X\to X$ is a projection
of a point $x$ onto a closed linear subspace $S$. Then 
\[
\|x-Tx\|^{2}=\|x\|^{2}-\|Tx\|^{2}\mbox{ for all }x\in X.
\]
\end{thm}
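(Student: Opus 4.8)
The plan is to reduce this to the Pythagorean identity in a Hilbert space. The single fact I would invoke is the defining orthogonality property of the projection onto a closed linear subspace: since $Tx = P_S x$ lies in $S$ and the residual $x - Tx$ lies in $S^\perp$, the vectors $Tx$ and $x - Tx$ are orthogonal, i.e. $\langle Tx, x - Tx\rangle = 0$. This is standard and may be quoted without further comment.

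With that in hand, I would write $x = Tx + (x - Tx)$ and expand the squared norm:
\[
\|x\|^2 = \|Tx\|^2 + 2\langle Tx, x - Tx\rangle + \|x - Tx\|^2 = \|Tx\|^2 + \|x - Tx\|^2,
\]
where the cross term vanishes by orthogonality. Rearranging gives $\|x - Tx\|^2 = \|x\|^2 - \|Tx\|^2$, and since $x \in X$ was arbitrary the identity holds for all $x$. There is essentially no obstacle here; the only thing one must be careful about is that $S$ is a \emph{linear} subspace (not merely affine), so that $0 \in S$ and the residual genuinely lies in $S^\perp$ rather than in a translate of it — this is exactly what makes the cross term disappear.
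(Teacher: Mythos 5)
Your proof is correct and is the standard argument; the paper itself states this result without proof, recalling it as an elementary fact, and your orthogonal decomposition $x = Tx + (x - Tx)$ with the vanishing cross term is exactly the argument one would supply. Your closing remark about $S$ being a linear (not merely affine) subspace is a sensible point of care, though note the orthogonality $\langle Tx, x-Tx\rangle = 0$ is what is really used, and for a linear subspace this follows from $x - Tx \in S^{\perp}$ together with $Tx \in S$.
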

\begin{defn}
(Fej\'{e}r monotone sequence) Let $X$ be a Hilbert space, $C\subset X$
be a closed convex set, and $\{x_{i}\}$ be a sequence in $X$. We
say that $\{x_{i}\}$ is\emph{ Fej\'{e}r monotone with respect to
$C$} if 
\[
\|x_{i+1}-c\|\leq\|x_{i}-c\|\mbox{ for all }c\in C\mbox{ and }i=1,2,\dots
\]

\end{defn}
A tool for obtaining a Fej\'{e}r monotone sequence is stated below.
\begin{thm}
\label{thm:Fejer-contraction}(Fej\'{e}r attraction property) Let
$X$ be a Hilbert space. For a closed convex set $C\subset X$, $x\in X$,
$\lambda\in[0,2]$, and the projection $P_{C}(x)$ of $x$ onto $C$,
let the \emph{relaxation operator }$R_{C,\lambda}:X\to X$ \cite{Agmon54}
be defined by 
\[
R_{C,\lambda}(x)=x+\lambda(P_{C}(x)-x).
\]
Then 
\[
\|R_{C,\lambda}(x)-c\|^{2}\leq\|x-c\|^{2}-\lambda(2-\lambda)d(x,C)^{2}\mbox{ for all }y\in C.
\]
(For this paper, we only consider the case $\lambda=1$, which corresponds
to the projection.)
\end{thm}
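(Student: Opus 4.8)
The plan is to expand the squared norm directly and feed in the variational inequality that characterizes the projection onto a closed convex set. Write $p:=P_{C}(x)$, so that for any $c\in C$ we have $R_{C,\lambda}(x)-c=(x-c)+\lambda(p-x)$, and hence
\[
\|R_{C,\lambda}(x)-c\|^{2}=\|x-c\|^{2}+2\lambda\langle x-c,p-x\rangle+\lambda^{2}\|p-x\|^{2}.
\]
First I would rewrite the cross term by splitting $x-c=(x-p)+(p-c)$, which gives $\langle x-c,p-x\rangle=-\|x-p\|^{2}+\langle p-c,p-x\rangle$.

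The key input is the obtuse-angle inequality for the projection onto a closed convex set, namely $\langle x-p,c-p\rangle\leq 0$ for every $c\in C$, equivalently $\langle p-c,p-x\rangle\leq 0$. Substituting this bound yields $\langle x-c,p-x\rangle\leq-\|x-p\|^{2}$, and plugging back into the expansion gives
\[
\|R_{C,\lambda}(x)-c\|^{2}\leq\|x-c\|^{2}-2\lambda\|x-p\|^{2}+\lambda^{2}\|x-p\|^{2}=\|x-c\|^{2}-\lambda(2-\lambda)\|x-p\|^{2}.
\]
Since $\|x-p\|=d(x,C)$, this is exactly the claimed estimate; the restriction $\lambda\in[0,2]$ is what makes $\lambda(2-\lambda)\geq 0$, so that the inequality genuinely certifies Fej\'{e}r attraction.

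There is essentially no serious obstacle here: once the projection inequality is in hand, the argument is a one-line computation. The only point that needs care is that $C$ is merely convex, not affine, so one must invoke the variational inequality $\langle x-P_{C}(x),c-P_{C}(x)\rangle\leq 0$ rather than the orthogonality identity available for subspaces (the identity recorded in Theorem \ref{thm:1-attractive} for the linear case). In the applications of this paper $C$ is always an affine subspace and $\lambda=1$, in which case $R_{C,1}=P_{C}$ and the estimate reduces to the Pythagorean identity $\|P_{C}(x)-c\|^{2}+\|x-P_{C}(x)\|^{2}=\|x-c\|^{2}$; stating and proving the relaxed version for general convex $C$ and $\lambda\in[0,2]$ costs nothing extra and is what I would do.
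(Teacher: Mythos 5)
Your argument is correct and complete: the expansion of $\|R_{C,\lambda}(x)-c\|^{2}$ combined with the variational inequality $\langle x-P_{C}(x),c-P_{C}(x)\rangle\leq 0$ is the standard proof of the Fej\'{e}r attraction property, and your handling of the sign of $\lambda$ and the identification $\|x-P_{C}(x)\|=d(x,C)$ are both fine. The paper itself states this theorem without proof, as a recalled classical tool (it only cites \cite{Agmon54} for the relaxation operator), so there is nothing to compare against; your write-up supplies exactly the omitted argument, and your closing observation that the only place convexity rather than linearity enters is in replacing the orthogonality identity of Theorem \ref{thm:1-attractive} by the obtuse-angle inequality is accurate. (Incidentally, the quantifier ``for all $y\in C$'' in the statement is a typo for ``for all $c\in C$,'' which your proof implicitly corrects.)
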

We need a few lemmas proven in \cite{BD86} and a few classical results
used in \cite{BD86} for the proof of our result.
\begin{thm}
\label{thm:Unif-bddness}(Uniform boundedness principle) Let $\{f_{n}\}$
be a sequence of continuous linear functionals on a Hilbert space
$X$ such that $\sup_{n}|f_{n}(x)|<\infty$ for each $x\in X$. Then
$\|f_{n}\|\leq K<\infty$.\end{thm}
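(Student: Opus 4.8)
The plan is to invoke the Baire Category Theorem, exploiting that a Hilbert space is a complete metric space. First I would introduce, for each positive integer $m$, the set
\[
E_m := \{x\in X : |f_n(x)|\leq m\text{ for all }n\} = \bigcap_n\{x\in X : |f_n(x)|\leq m\}.
\]
Each $f_n$ is continuous, so each set $\{x : |f_n(x)|\leq m\}$ is closed, and hence $E_m$ is closed as an intersection of closed sets. The pointwise boundedness hypothesis $\sup_n|f_n(x)|<\infty$ says precisely that every $x\in X$ lies in some $E_m$, i.e., $X=\bigcup_{m=1}^{\infty}E_m$.

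Next I would apply the Baire Category Theorem: since $X$ is complete and equals the countable union of the closed sets $E_m$, at least one $E_N$ must have nonempty interior. So there exist $x_0\in X$ and $r>0$ with the closed ball $\bar{B}(x_0,r)\subset E_N$; that is, $|f_n(x_0+h)|\leq N$ for every $n$ and every $h$ with $\|h\|\leq r$. Note also $x_0\in E_N$, so $|f_n(x_0)|\leq N$ for all $n$.

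Finally I would convert this into a uniform bound on the norms. For any $h$ with $\|h\|\leq r$ and any $n$, linearity gives $|f_n(h)|=|f_n(x_0+h)-f_n(x_0)|\leq|f_n(x_0+h)|+|f_n(x_0)|\leq 2N$. Rescaling, for an arbitrary nonzero $u\in X$ take $h=ru/\|u\|$ to obtain $|f_n(u)|\leq(2N/r)\|u\|$, hence $\|f_n\|\leq 2N/r=:K$ for every $n$, which is the claim.

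The only real obstacle is the appeal to Baire category — verifying that the $E_m$ are closed and that they cover $X$ is routine, and the passage from a bound on a ball to a bound on the norm is just the standard linearity/rescaling argument. If one wished to avoid citing Baire directly, an alternative is a ``gliding hump'' argument: assuming $\sup_n\|f_n\|=\infty$, one extracts a subsequence $\{f_{n_j}\}$ with $\|f_{n_j}\|$ growing fast and builds a convergent series $x=\sum_j 3^{-j}u_j$ with suitably chosen near-unit vectors $u_j$ so that $|f_{n_j}(x)|\to\infty$, contradicting pointwise boundedness. I would, however, simply cite the Baire Category Theorem, as it is the cleanest route and the result is needed here only as a black-box tool for the later convergence proofs.
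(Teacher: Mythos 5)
Your Baire category argument is the standard, correct proof of the uniform boundedness principle: the sets $E_m$ are closed and cover $X$, Baire gives one with interior, and the translation/rescaling step converts the bound on a ball into a uniform bound on the operator norms. The paper itself offers no proof of this statement --- it is quoted as a classical tool (used in the Boyle--Dykstra analysis) --- so there is nothing to compare against; your write-up is complete and would serve as a proof if one were wanted.
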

\begin{cor}
\label{cor:Unif-bddness-2}Let $\{f_{n}\}$ be a sequence of linear
functionals on a Hilbert space $X$ such that for each $x\in X$,
$\{f_{n}(x)\}$ converges. Then there is a continuous linear functional
$f$ such that $f(x)=\lim_{n}f_{n}(x)$ and $\|f\|\leq\liminf_{n}\|f_{n}\|$.\end{cor}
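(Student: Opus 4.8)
The plan is to obtain this as a direct consequence of the uniform boundedness principle, Theorem \ref{thm:Unif-bddness}. First I would note that for each fixed $x\in X$, a convergent sequence of reals is bounded, so $\sup_{n}|f_{n}(x)|<\infty$. This is precisely the hypothesis required to invoke Theorem \ref{thm:Unif-bddness}, which then supplies a uniform bound $\|f_{n}\|\leq K<\infty$ for all $n$.

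Next I would define $f\colon X\to\mathbb{R}$ by $f(x):=\lim_{n}f_{n}(x)$, which makes sense by assumption. Linearity of $f$ follows at once from the linearity of each $f_{n}$ together with the compatibility of limits with addition and scalar multiplication: for $x,y\in X$ and scalars $\alpha,\beta$, $f(\alpha x+\beta y)=\lim_{n}\big(\alpha f_{n}(x)+\beta f_{n}(y)\big)=\alpha f(x)+\beta f(y)$.

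For the norm estimate (which also yields continuity), I would fix $x$ with $\|x\|\leq 1$ and pass to the limit in the inequality $|f_{n}(x)|\leq\|f_{n}\|\,\|x\|\leq\|f_{n}\|$. Since $|f(x)|=\lim_{n}|f_{n}(x)|=\liminf_{n}|f_{n}(x)|\leq\liminf_{n}\|f_{n}\|$, taking the supremum over the unit ball gives $\|f\|\leq\liminf_{n}\|f_{n}\|$; this quantity is finite, bounded by the constant $K$ above, so $f$ is a continuous linear functional.

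There is no genuine obstacle in this argument; the only step requiring a moment's attention is the reduction to the hypothesis of Theorem \ref{thm:Unif-bddness}, namely recognizing that the assumed pointwise convergence of $\{f_{n}(x)\}$ is exactly what delivers the pointwise boundedness $\sup_{n}|f_{n}(x)|<\infty$ needed to apply the uniform boundedness principle.
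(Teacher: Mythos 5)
Your proof is correct and follows exactly the intended route: the paper states this as a classical corollary of Theorem \ref{thm:Unif-bddness} without giving a proof, and deducing pointwise boundedness from pointwise convergence, invoking the uniform boundedness principle, and passing to the limit in $|f_{n}(x)|\leq\|f_{n}\|\|x\|$ is the standard derivation it relies on. No gaps.
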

\begin{thm}
\label{thm:Kadec-Klee}(Kadec-Klee property) In a Hilbert space, $x_{n}\to x$
strongly if and only if $x_{n}\to x$ weakly and $\|x_{n}\|\to\|x\|$.
\begin{thm}
\label{thm:Banach-Saks}(Banach-Saks property) Let $\{x_{n}\}$ be
a sequence in a Hilbert space that converges weakly to $x$. Then
we can find a subsequence $\{x_{n_{k}}\}$ such that the arithmetic
mean $\frac{1}{m}\sum_{k=1}^{m}x_{n_{k}}$ converges strongly to $x$.
\end{thm}
\end{thm}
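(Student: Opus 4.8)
The plan is to prove the two implications separately, exploiting the inner product structure available in a Hilbert space. The forward direction is essentially a repackaging of standard facts. If $x_{n}\to x$ strongly, then for every continuous linear functional $f$ we have $f(x_{n})\to f(x)$, so in particular $x_{n}\to x$ weakly; and the reverse triangle inequality $\bigl|\,\|x_{n}\|-\|x\|\,\bigr|\le\|x_{n}-x\|$ shows $\|x_{n}\|\to\|x\|$. No genuine work is needed here, so I would dispatch this direction in a sentence.

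For the reverse (and substantive) direction, suppose $x_{n}\to x$ weakly and $\|x_{n}\|\to\|x\|$. The key is the algebraic identity
\[
\|x_{n}-x\|^{2}=\|x_{n}\|^{2}-2\langle x_{n},x\rangle+\|x\|^{2},
\]
to which I would pass to the limit term by term. The first term converges to $\|x\|^{2}$ by the assumed norm convergence. For the cross term, observe that $y\mapsto\langle y,x\rangle$ is a \emph{fixed} continuous linear functional; evaluating the weak convergence of $\{x_{n}\}$ against this functional gives $\langle x_{n},x\rangle\to\langle x,x\rangle=\|x\|^{2}$. Combining the two limits, the right-hand side tends to $\|x\|^{2}-2\|x\|^{2}+\|x\|^{2}=0$, so $\|x_{n}-x\|\to0$, which is exactly strong convergence.

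There is no serious obstacle in the Hilbert space setting: the entire content of the theorem is the expansion of $\|x_{n}-x\|^{2}$ combined with the remark that weak convergence must be tested against the fixed limit vector $x$ (not against the varying $x_{n}$) in order to control the middle term. The one point I would flag for care is keeping $\langle x_{n},x_{n}\rangle=\|x_{n}\|^{2}$ (handled by the norm hypothesis) conceptually separate from $\langle x_{n},x\rangle$ (handled by weak convergence); conflating the two is the only way a careless version of this argument could fail.
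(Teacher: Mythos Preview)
Your argument for the Kadec--Klee property is correct and is the standard one; there is nothing to object to. Note, however, that the paper itself does not supply a proof of this statement: Theorems~\ref{thm:Kadec-Klee} and~\ref{thm:Banach-Saks} are quoted as classical facts needed later in the proof of Proposition~\ref{prop:4-condns}, so there is no paper-proof to compare against.

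The genuine gap is that the statement block you were given contains \emph{two} theorems, and you have addressed only the first. The Banach--Saks property---that every weakly convergent sequence in a Hilbert space admits a subsequence whose Ces\`aro means converge strongly---is entirely absent from your proposal. This is not a consequence of Kadec--Klee and needs its own argument. The classical proof (after translating so that the weak limit is $0$) inductively extracts a subsequence with $|\langle x_{n_{j}}, x_{n_{k}}\rangle|\le 1/k$ for all $j<k$, and then expands
\[
\Bigl\|\frac{1}{m}\sum_{k=1}^{m}x_{n_{k}}\Bigr\|^{2}
=\frac{1}{m^{2}}\sum_{k=1}^{m}\|x_{n_{k}}\|^{2}
+\frac{2}{m^{2}}\sum_{1\le j<k\le m}\langle x_{n_{j}},x_{n_{k}}\rangle,
\]
bounding the diagonal sum by $O(1/m)$ (the norms are bounded by the uniform boundedness principle) and the off-diagonal sum by $O(1/m)$ as well. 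You should either supply such an argument or, following the paper, simply cite both results as standard without proof.
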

\begin{lem}
\cite{BD86}\label{lem:Sum-of-squares}(Sum of squares) Suppose a
sequence of nonnegative numbers $\{a_{j}\}_{j}$ is such that $\sum_{j=1}^{\infty}a_{j}^{2}$
converges. Then there is a subsequence $\{j_{t}\}_{t}$ such that
the sequence $\{\sum_{s=1}^{j_{t}}a_{s}a_{j_{t}}\}_{t}$ converges
to zero.
\end{lem}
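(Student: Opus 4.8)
The plan is to exploit the hypothesis that $\sum_{j=1}^\infty a_j^2$ converges, so that the tail sums $\sum_{s=j+1}^\infty a_s^2$ tend to $0$. First I would fix notation: write $T_j := \sum_{s=1}^j a_s a_j = a_j \sum_{s=1}^j a_s$, and let $S := \sum_{s=1}^\infty a_s^2 < \infty$. The key observation is that $\sum_{s=1}^j a_s \le \bigl(j \sum_{s=1}^j a_s^2\bigr)^{1/2} \le (jS)^{1/2}$ by Cauchy--Schwarz, so $0 \le T_j \le a_j (jS)^{1/2}$. This bound by itself is not good enough for every $j$, since $a_j$ only goes to zero like $o(1/\sqrt{j})$ on average, not pointwise; so the point is to select a \emph{good} subsequence $\{j_t\}$ along which $j_t a_{j_t}^2$ is small.

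The extraction of the subsequence is where the actual content lies. Since $\sum_j a_j^2 < \infty$, for each $m$ the partial sums satisfy $\sum_{j=m}^{2m} a_j^2 \to 0$ as $m \to \infty$ (it is a tail of a convergent series). For the block of indices $j \in \{m, m+1, \dots, 2m\}$, which has $m+1$ terms, there must exist at least one index $j_m$ in that block with $a_{j_m}^2 \le \frac{1}{m+1}\sum_{j=m}^{2m} a_j^2$; otherwise the block sum would exceed $\sum_{j=m}^{2m} a_j^2$, a contradiction. For this choice, since $j_m \le 2m$, we get
\[
j_m a_{j_m}^2 \le \frac{2m}{m+1}\sum_{j=m}^{2m} a_j^2 \le 2 \sum_{j=m}^{2m} a_j^2 \to 0.
\]
Running $m$ through a sequence $m_1 < m_2 < \cdots$ with $2m_t < m_{t+1}$ guarantees the resulting indices $j_{m_t}$ are strictly increasing, giving a genuine subsequence.

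Finally I would assemble the pieces: along this subsequence $\{j_t\}$, the bound $0 \le T_{j_t} \le a_{j_t}(j_t S)^{1/2} = S^{1/2} (j_t a_{j_t}^2)^{1/2} \to 0$, which is exactly the claim. The main obstacle, as indicated above, is recognizing that a pointwise estimate on $a_j$ is unavailable and that one must instead average over a growing block of indices to find a single good index per block; the Cauchy--Schwarz step and the squeeze at the end are routine once the subsequence is in hand.
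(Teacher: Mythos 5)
Your proof is correct. Every step checks out: $\sum_{s=1}^{j}a_sa_j=a_j\sum_{s=1}^{j}a_s\le a_j(jS)^{1/2}$ by Cauchy--Schwarz, the pigeonhole selection of $j_m\in\{m,\dots,2m\}$ with $a_{j_m}^2\le\frac{1}{m+1}\sum_{j=m}^{2m}a_j^2$ gives $j_ma_{j_m}^2\le 2\sum_{j\ge m}a_j^2\to0$, the spacing condition $2m_t<m_{t+1}$ makes the selected indices strictly increasing, and nonnegativity of the $a_j$ supplies the lower bound $0$ needed for the squeeze. Note that the paper does not prove this lemma at all --- it is quoted from \cite{BD86} --- so there is no in-paper argument to match; the classical proof in the literature proceeds instead by contradiction: assuming $a_jA_j\ge\varepsilon$ for all large $j$ (where $A_j:=\sum_{s\le j}a_s$), one splits into the case $A_j$ bounded, where $a_j\to0$ forces $a_jA_j\to0$, and the case $A_j\to\infty$, where $a_j\ge\varepsilon/A_j$ yields $a_j^2\ge\varepsilon a_j/A_j$ and the Abel--Dini-type divergence of $\sum a_j/A_j$ contradicts $\sum a_j^2<\infty$. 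Your route is more constructive and arguably more elementary, since it avoids the divergence lemma and produces an explicit rate ($T_{j_t}\le S^{1/2}(2\sum_{j\ge m_t}a_j^2)^{1/2}$) along the chosen subsequence, at the mild cost of the block-averaging bookkeeping; either argument is perfectly adequate for the use made of the lemma in the paper.
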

We now prove a result that will be used in all the variants of our
strong convergence results for Algorithms \ref{alg:accel} and \ref{alg:accel2}.
This result is modified from that of \cite{BD86}, and we follow the
treatment in \cite{EsRa11}.
\begin{prop}
\label{prop:4-condns}(Conditions for strong convergence) Let $\{M_{l}\}_{l=1}^{k}$
be linear subspaces of a Hilbert space $X$, and $M:=\cap_{l=1}^{k}M_{l}$.
For a starting $x_{0}\in X$, suppose that the iterates $\{x_{i}\}_{i}$
generated by an algorithm satisfy
\begin{enumerate}
\item $\{x_{i}\}_{i}$ is Fej\'{e}r monotone with respect to $M$.
\item There exists a subsequence $\{i_{t}\}_{t}$ such that 
\begin{equation}
\limsup_{t\to\infty}\langle x_{i_{t}}-x_{0},x_{i_{t}}\rangle\leq0.\label{eq:prop-ppty2}
\end{equation}

\item For all $l\in\{1,\dots,k\}$ and $i>0$, there is some $p_{i}^{l}>0$
such that $x_{i+p_{i}^{l}}\in M_{l}$ and $\lim_{i\to\infty}\|x_{i}-x_{i+p_{i}^{l}}\|=0$.
\item $x_{i}-x_{0}\in M^{\perp}$ for all $i$.
\end{enumerate}
Then the sequence of iterates $\{x_{i}\}$ converges strongly to $P_{M}(x_{0})$.\end{prop}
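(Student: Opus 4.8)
The plan is to run a Fej\'er-monotonicity argument in which the ``rigidity'' hypotheses (2) and (4) are precisely what forces the limit to be $P_M(x_0)$. First observe that $M$, being an intersection of linear subspaces, is a nonempty closed convex set ($0\in M$), so by (1) the sequence $\{\|x_i-c\|\}_i$ is non-increasing, hence convergent, for every $c\in M$; in particular $\{x_i\}_i$ is bounded and $P_M(x_0)$ is well defined.

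Next I would extract a \emph{strongly} convergent subsequence. Start from the subsequence $\{i_t\}_t$ supplied by (2); by boundedness, pass to a further subsequence (not relabelled) along which $x_{i_t}\to\bar x$ weakly, noting that the $\limsup$ in \eqref{eq:prop-ppty2} can only decrease under this passage. To see $\bar x\in M$: fix $l\in\{1,\dots,k\}$; by (3), $x_{i_t+p^l_{i_t}}\in M_l$ while $\|x_{i_t}-x_{i_t+p^l_{i_t}}\|\to0$ since $i_t\to\infty$, so $x_{i_t+p^l_{i_t}}\to\bar x$ weakly as well, and since the closed subspace $M_l$ is weakly closed, $\bar x\in M_l$; as $l$ was arbitrary, $\bar x\in M$. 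The crux is the following computation, which is where (4) enters: because $\bar x\in M$ and $x_{i_t}-x_0\in M^\perp$, the cross term $\langle x_{i_t}-x_0,\bar x\rangle$ vanishes, so
\[
\langle x_{i_t}-x_0,x_{i_t}\rangle=\langle x_{i_t}-x_0,\,x_{i_t}-\bar x\rangle=\|x_{i_t}-\bar x\|^{2}+\langle\bar x-x_0,\,x_{i_t}-\bar x\rangle,
\]
and the last inner product tends to $0$ by weak convergence. Hence \eqref{eq:prop-ppty2} forces $\limsup_t\|x_{i_t}-\bar x\|^{2}\le0$, i.e. $x_{i_t}\to\bar x$ strongly.

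Finally I would upgrade to the full sequence and identify the limit. Since $\bar x\in M$, (1) makes $\{\|x_i-\bar x\|\}_i$ non-increasing, and it has a subsequence tending to $0$, so $x_i\to\bar x$ strongly. Also $x_0-\bar x=\lim_i(x_0-x_i)$, each $x_0-x_i$ lies in the closed subspace $M^\perp$ by (4), so $x_0-\bar x\in M^\perp$; together with $\bar x\in M$ and uniqueness of the orthogonal decomposition of $x_0$, this gives $\bar x=P_M(x_0)$. I do not anticipate a genuine technical obstacle, since conditions (1)--(4) have been packaged to carry the weight; the one step that is not pure bookkeeping is the displayed identity, i.e. noticing that (4) annihilates the cross term and so converts the one-sided scalar bound (2) into norm convergence (alternatively one could extract $\|x_{i_t}\|\to\|\bar x\|$ and invoke Kadec--Klee, Theorem~\ref{thm:Kadec-Klee}, but the direct estimate is shorter). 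The substantive work of the paper then lies in verifying (1)--(4) for Algorithms~\ref{alg:accel} and \ref{alg:accel2}.
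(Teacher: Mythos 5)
Your proof is correct, and while it follows the same overall architecture as the paper's (extract a weakly convergent subsequence from the one in (2), show the weak limit lies in $M$ via (3), use (2) and (4) to upgrade to strong subsequential convergence, then invoke Fej\'{e}r monotonicity for the full sequence), the execution is noticeably more elementary at two points. First, to show $\bar x\in M_l$ you use only that a closed subspace is weakly closed, whereas the paper passes through the Banach--Saks property (Theorem \ref{thm:Banach-Saks}) to get Ces\`{a}ro means in $M_l$ converging strongly to $h$; your shortcut is valid and removes a named theorem. Second, and more substantively, your displayed identity
\[
\langle x_{i_{t}}-x_{0},x_{i_{t}}\rangle=\|x_{i_{t}}-\bar x\|^{2}+\langle\bar x-x_{0},x_{i_{t}}-\bar x\rangle
\]
converts \eqref{eq:prop-ppty2} directly into $\limsup_{t}\|x_{i_{t}}-\bar x\|^{2}\leq0$, bypassing the paper's route through $u:=\lim_{t}\|x_{i_{t}}\|$, the uniform boundedness corollary (Corollary \ref{cor:Unif-bddness-2}), the inequality $u^{2}\leq\|h\|^{2}$, and the Kadec--Klee property (Theorem \ref{thm:Kadec-Klee}). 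You also identify the limit by the orthogonal decomposition $x_{0}=\bar x+(x_{0}-\bar x)$ with $\bar x\in M$ and $x_{0}-\bar x\in M^{\perp}$, rather than by deriving the variational inequality $\langle h-x_{0},h-y\rangle\leq0$ for all $y\in M$ as the paper does; for a closed subspace these are equivalent characterizations of the projection. The paper's version has the minor advantage of tracking the quantity $u$ explicitly and exhibiting the variational inequality, which mirrors the Boyle--Dykstra argument it is adapted from; yours is shorter and self-contained. No gaps.
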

\begin{proof}
The proof of this result is modified from that of \cite{BD86}, following
the presentation in \cite{EsRa11}. By property (2), we choose a subsequence
satisfying \eqref{eq:prop-ppty2}. By property (1), $\{x_{i_{t}}\}_{t}$
is a bounded sequence, so we can assume, by finding a subsequence
if necessary, that the weak limit 
\[
h:=\mbox{w-}\lim_{t\to\infty}x_{i_{t}}
\]
exists. Property (3) states that for each $l\in\{1,\dots,k\}$, we
can find a sequence $\{p_{i}^{l}\}_{i}\subset[0,\infty)$ such that
$x_{i+p_{i}^{l}}\in M_{l}$ and $\lim_{i\to\infty}\|x_{i}-x_{i+p_{i}^{l}}\|=0$.
We therefore have 
\[
\mbox{w-}\lim_{t\to\infty}x_{i_{t}+p_{i_{t}}^{l}}=\mbox{w-}\lim_{t\to\infty}x_{i_{t}}=h.
\]
The Banach-Saks Property (Theorem \ref{thm:Banach-Saks}) implies
that we can further choose a subsequence of $\{x_{i_{t}}\}_{t}$ if
necessary (we don't relabel) so that 
\begin{equation}
\frac{1}{m}\sum_{t=1}^{m}x_{i_{t}+p_{i_{t}}^{l}}\mbox{ converges strongly to }h\mbox{ as }m\to\infty.\label{eq:conv-comb-in-M-l-1}
\end{equation}
The term on the left of \eqref{eq:conv-comb-in-M-l-1} lies in $M_{l}$.
Since $l$ is arbitrary, we conclude that $h\in M$. Since $\{\|x_{i_{t}}\|\}_{t}$
is bounded, we can choose a subsequence if necessary so that 
\[
u:=\lim_{t\to\infty}\|x_{i_{t}}\|.
\]
By applying the Uniform Boundedness Principle (Theorem \ref{thm:Unif-bddness}
and Corollary \ref{cor:Unif-bddness-2}), we have 
\begin{equation}
\|h\|\leq\liminf_{t\to\infty}\|x_{i_{t}}\|=\lim_{t\to\infty}\|x_{i_{t}}\|=u.\label{eq:weak-conv-ineq-1}
\end{equation}
Since $x_{i_{t}}-x_{0}\in M^{\perp}$, we have \textrm{$\left\langle x_{i_{t}}-x_{0},y\right\rangle =0$
}for all\textrm{ $y\in M$. }So for all $y\in M$,
\begin{eqnarray}
0 & \geq & \limsup_{t\to\infty}\left\langle x_{i_{t}}-x_{0},x_{i_{t}}\right\rangle \nonumber \\
 & = & \limsup_{t\to\infty}\left\langle x_{i_{t}}-x_{0},x_{i_{t}}-y\right\rangle \nonumber \\
 & = & u^{2}-\left\langle h,y\right\rangle -\left\langle x_{0},h\right\rangle +\left\langle x_{0},y\right\rangle \label{eq:h-u-ineq-1}\\
 & \geq & \left\langle h-x_{0},h-y\right\rangle .\nonumber 
\end{eqnarray}
This means that $h=P_{M}x_{0}$. Next, we use \eqref{eq:h-u-ineq-1}
and substitute $y=h$ to get $0\geq u^{2}-\|h\|^{2}$, which together
with \eqref{eq:weak-conv-ineq-1}, gives $u=\|h\|$. By the Kadec-Klee
property (Theorem \ref{thm:Kadec-Klee}), we conclude that the subsequence
$\{x_{i_{t}}\}_{t}$ converges strongly to $h$. 

To see that $\{x_{i}\}_{i}$ converges strongly to $h$, we make use
of the Fej\'{e}r monotonicity of the iterates with respect to $M$
and $h\in M$.\end{proof}
\begin{rem}
\label{rem:condns-1-4}(Conditions (1) and (4) of Proposition \ref{prop:4-condns})
The sequence we apply Proposition \ref{prop:4-condns} on for our
next results on Algorithm \ref{alg:accel} is actually $x_{0},$ $\tilde{x}_{0},$
$x_{1},$ $\tilde{x}_{1},$ $x_{2},$ \textrm{$\tilde{x}_{2},$} $\dots$
instead of $\{x_{i}\}$. Similarly, the sequence we apply Proposition
\ref{prop:4-condns} on for our next results on Algorithm \ref{alg:accel}
is actually $x_{0},$ $x_{0}^{\prime},$ $x_{0}^{\prime\prime},$
$x_{1},$ $x_{1}^{\prime},$ $x_{1}^{\prime\prime},$ $x_{2},$ $x_{2}^{\prime},$
$x_{2}^{\prime\prime},$ $\dots$. We remark that for Algorithm \ref{alg:accel},
condition (1) holds because of \eqref{eq:nested-projs-1}. Similarly,
in Algorithm \ref{alg:accel2}, condition (1) holds due to \eqref{eq:nested-projs}.
Condition (4) holds for Algorithm \ref{alg:accel} because \eqref{eq:nested-projs-1}
implies that 
\begin{eqnarray*}
 &  & x_{i}-\tilde{x}_{i\phantom{+1}}\in M_{l_{i}}^{\perp}\subset M^{\perp}\\
 & \mbox{and } & \tilde{x}_{i}-x_{i+1}\in\tilde{H}_{i}^{\perp}\,\subset M^{\perp},
\end{eqnarray*}
from which we can easily deduce $x_{0}-x_{i}\in M^{\perp}$ and $x_{0}-\tilde{x}_{i}\in M^{\perp}$
for all $i$ as needed. The analysis for Algorithm \ref{alg:accel2}
is similar.
\end{rem}
We now prove the convergence of Algorithm \ref{alg:accel} for the
easier case first.
\begin{thm}
\label{thm:conv-alg-1-ver-1}(Strong convergence of Algorithm \ref{alg:accel}:
Version 1) Suppose that in Algorithm \ref{alg:accel}, the additional
conditions are satisfied:
\begin{enumerate}
\item [(A)]There is a number $\bar{p}$ such that for all $l\in\{1,\dots,k\}$
and $i>0$, there is a $p_{i}^{l}\in[0,\bar{p}]$ such that $\tilde{x}_{i+p_{i}^{l}}=P_{M_{l}}(x_{i+p_{i}^{l}})$. 
\item [(B)]The hyperplanes $\tilde{H}_{i}$ are chosen such that $x_{0}-x_{i}\in\bspan(\{a_{j}:j\in J_{i}\})$
for all iterations $i$. 
\end{enumerate}
Then the sequence of iterates $\{x_{i}\}_{i}$ converges strongly
to $P_{M}(x_{0})$. \end{thm}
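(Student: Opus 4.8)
The plan is to apply Proposition \ref{prop:4-condns} to the interleaved sequence $y_{0}=x_{0}$, $y_{1}=\tilde{x}_{0}$, $y_{2}=x_{1}$, $y_{3}=\tilde{x}_{1},\dots$ (so $y_{2i}=x_{i}$ and $y_{2i+1}=\tilde{x}_{i}$), as anticipated in Remark \ref{rem:condns-1-4}; once $\{y_{n}\}$ is shown to converge strongly to $P_{M}(x_{0})$, so does its subsequence $\{x_{i}\}$. Conditions (1) and (4) of Proposition \ref{prop:4-condns} are exactly the content of Remark \ref{rem:condns-1-4}: each passage $y_{n}\mapsto y_{n+1}$ is an orthogonal projection onto a closed subspace ($M_{l_{i}}$ or $\tilde{H}_{i}$) containing $M$, which gives Fej\'{e}r monotonicity, and each increment lies in $M_{l_{i}}^{\perp}$ or $\tilde{H}_{i}^{\perp}$, both inside $M^{\perp}$, whence $y_{n}-x_{0}\in M^{\perp}$ for all $n$. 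Next I would record a ``step-to-zero'' estimate: since $y_{n+1}-c$ is the projection of $y_{n}-c$ onto a subspace containing $c\in M$, Theorem \ref{thm:1-attractive} (equivalently Theorem \ref{thm:Fejer-contraction} with $\lambda=1$) gives $\|y_{n}-y_{n+1}\|^{2}\le\|y_{n}-c\|^{2}-\|y_{n+1}-c\|^{2}$; summing, $\sum_{m\ge0}\|y_{m}-y_{m+1}\|^{2}<\infty$, so the tails $\sum_{m=n}^{n+q-1}\|y_{m}-y_{m+1}\|^{2}\to0$ as $n\to\infty$ uniformly in $q$, and by Cauchy--Schwarz $\|y_{n}-y_{n+q}\|\le\sqrt{q}\,(\sum_{m=n}^{n+q-1}\|y_{m}-y_{m+1}\|^{2})^{1/2}\to0$ whenever $q$ stays bounded.

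Condition (3) then follows from assumption (A): for each $l$ and $i$ there is $p_{i}^{l}\in[0,\bar{p}]$ with $\tilde{x}_{i+p_{i}^{l}}=P_{M_{l}}(x_{i+p_{i}^{l}})\in M_{l}$, and in the $y$-sequence this term sits at bounded distance (at most $2\bar{p}+1$) from $y_{2i}=x_{i}$, with the analogous statement for the terms $y_{2i+1}=\tilde{x}_{i}$; the step-to-zero estimate then forces the corresponding differences to $0$. The substantive part is condition (2), and this is where assumption (B) enters. Because each $M_{l_{i}}$ is a linear subspace, $b_{i}=\langle a_{i},P_{M_{l_{i}}}(x_{i})\rangle=0$, so $H_{i}=a_{i}^{\perp}$ and $\tilde{H}_{i}=\bigcap_{j\in J_{i}}a_{j}^{\perp}=(\bspan\{a_{j}:j\in J_{i}\})^{\perp}$. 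Assumption (B) says $x_{0}-x_{i}\in\bspan\{a_{j}:j\in J_{i}\}$, while $x_{i+1}=P_{\tilde{H}_{i}}(\tilde{x}_{i})\in\tilde{H}_{i}$, so $\langle x_{0}-x_{i},x_{i+1}\rangle=0$ and therefore
\[
\langle x_{i}-x_{0},x_{i}\rangle=\langle x_{i}-x_{0},x_{i}-x_{i+1}\rangle+\langle x_{i}-x_{0},x_{i+1}\rangle=\langle x_{i}-x_{0},x_{i}-x_{i+1}\rangle .
\]
Here $\{x_{i}-x_{0}\}$ is bounded by condition (1), and $\|x_{i}-x_{i+1}\|\le\|y_{2i}-y_{2i+1}\|+\|y_{2i+1}-y_{2i+2}\|\to0$ by the step-to-zero estimate, so $\langle x_{i}-x_{0},x_{i}\rangle\to0$. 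Taking the subsequence of $\{y_{n}\}$ consisting of the terms $y_{2i}=x_{i}$ establishes \eqref{eq:prop-ppty2}, and Proposition \ref{prop:4-condns} then yields strong convergence of $\{y_{n}\}$, hence of $\{x_{i}\}$, to $P_{M}(x_{0})$.

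The main obstacle is condition (2): without (B) there is no reason for $x_{0}-x_{i}$ to be orthogonal to $x_{i+1}$, so the cross term need not vanish and the reduction of $\langle x_{i}-x_{0},x_{i}\rangle$ to a product of a bounded factor and a vanishing factor collapses; similarly, the uniform bound $\bar{p}$ in (A) is precisely what legitimizes the Cauchy--Schwarz step used for condition (3). These are the points that Example \ref{exa:p-needed} addresses, showing that the two extra hypotheses cannot simply be removed.
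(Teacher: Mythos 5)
Your proposal is correct and follows essentially the same route as the paper: apply Proposition \ref{prop:4-condns} to the interleaved sequence $x_{0},\tilde{x}_{0},x_{1},\tilde{x}_{1},\dots$, obtain condition (3) from (A) via the summability of the squared step lengths together with the uniform bound $\bar{p}$, and obtain condition (2) from (B) via the observation that $b_{i}=0$, so that $\tilde{H}_{i}^{\perp}=\bspan(\{a_{j}:j\in J_{i}\})$ and hence $x_{0}-x_{i}\perp\tilde{H}_{i}\ni x_{i+1}$. The only (harmless) deviation is in the endgame of condition (2): the paper derives the exact identity $\langle x_{0}-x_{i},x_{i}\rangle=0$ by an induction showing $x_{i+1}=P_{\tilde{H}_{i}}(x_{0})$ with $0\in\tilde{H}_{i}$, whereas you settle for the asymptotic statement $\langle x_{i}-x_{0},x_{i}\rangle=\langle x_{i}-x_{0},x_{i}-x_{i+1}\rangle\to0$ (bounded factor times vanishing factor), which is equally sufficient for \eqref{eq:prop-ppty2}.
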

\begin{proof}
We apply Proposition \ref{prop:4-condns}. The sequence we apply Proposition
\ref{prop:4-condns} to is actually $x_{0},\tilde{x}_{0},x_{1},\tilde{x}_{1},x_{2},\tilde{x}_{2},\dots$
instead of $\{x_{i}\}$. By Remark \ref{rem:condns-1-4}, it suffices
to check conditions (2) and (3) of Proposition \ref{prop:4-condns}. 

\textbf{Step 1: Condition (A) implies Condition (3) of Proposition
\ref{prop:4-condns}.}

By condition (A), for any $i>0$ and $l\in\{1,\dots,k\}$, there exists
a $p_{i}^{l}\in[0,\bar{p}]$ such that $\tilde{x}_{i+p_{i}^{l}}\in M_{l}$.
By using Theorem \ref{thm:1-attractive} repeatedly, we have 
\[
\sum_{i=0}^{\infty}[\|x_{i}-\tilde{x}_{i}\|^{2}+\|\tilde{x}_{i}-x_{i+1}\|^{2}]\leq\|x_{0}\|^{2}<\infty.
\]
Therefore the sequence $\|x_{0}-\tilde{x}_{0}\|,$ $\|\tilde{x}_{0}-x_{1}\|,$
$\|x_{1}-\tilde{x}_{1}\|,$ $\dots$ converges to zero. Since 
\[
\|x_{i}-\tilde{x}_{i+p_{i}^{l}}\|\leq\|x_{i+p_{i}^{l}}-\tilde{x}_{i+p_{i}^{l}}\|+\sum_{j=0}^{\bar{p}-1}[\|x_{i+j}-\tilde{x}_{i+j}\|+\|\tilde{x}_{i+j}-x_{i+j+1}\|],
\]
we see that $\{\|x_{i}-\tilde{x}_{i+p_{i}^{l}}\|\}_{i}$ is bounded
by a finite sum of terms with limit zero. Hence $\|x_{i}-\tilde{x}_{i+p_{i}^{l}}\|\to0$
as $i\to\infty$. Thus condition (3) holds.

\textbf{Step 2: Condition (B) implies Condition (2) of Proposition
\ref{prop:4-condns}. }

We prove 
\begin{equation}
\langle x_{0}-x_{i},x_{i}\rangle=0\mbox{ for all }i>0,\label{eq:x0-xi-xi-condn}
\end{equation}
which clearly implies Condition (2). We use standard induction. It
is easy to check that formula \eqref{eq:x0-xi-xi-condn} holds for
$i=1$. Suppose it holds for $i=i^{*}$. We want to show that it holds
for $i=i^{*}+1$. We have $x_{i^{*}+1}=P_{\tilde{H}_{i^{*}}}(\tilde{x}_{i^{*}})$,
or equivalently, $x_{i^{*}+1}\in\tilde{x}_{i^{*}}+\tilde{H}_{i^{*}}^{\perp}$.
Since $i^{*}\in J_{i^{*}}$, we have $x_{i^{*}}-\tilde{x}_{i^{*}}\in\tilde{H}_{i^{*}}^{\perp}$,
so $x_{i^{*}}\in x_{i^{*}+1}+\tilde{H}_{i^{*}}^{\perp}$, or $x_{i^{*}+1}\in P_{\tilde{H}_{i^{*}}}(x_{i^{*}})$.
Since $x_{0}-x_{i^{*}}\in\bspan(\{a_{j}:j\in J_{i^{*}}\})$, we have
$x_{0}-x_{i^{*}}\in\tilde{H}_{i^{*}}^{\perp}$, so $x_{i^{*}+1}\in P_{\tilde{H}_{i^{*}}}(x_{0})$
using a similar argument. Since $0\in\tilde{H}_{i^{*}}$, we can deduce
\eqref{eq:x0-xi-xi-condn}, ending our proof by induction.
\end{proof}
Note that condition (A) of Theorem \ref{thm:conv-alg-1-ver-1} satisfied
in the classical method of alternating projections, but condition
(B) is not. We propose a second convergence result such that includes
the classical method of alternating projections. For the iterates
in Algorithm \ref{alg:accel}, $i\geq0$ and $l\in\{1,\dots,k\}$,
we define $\tilde{v}_{i,l}\in X$ and $v_{i,l}\in X$ to be such that
\begin{eqnarray*}
 &  & x_{i}-\tilde{x}_{i\phantom{+1}}=\sum_{l=1}^{k}\tilde{v}_{i,l},\\
 & \mbox{ and } & \tilde{x}_{i}-x_{i+1}=\sum_{l=1}^{k}v_{i,l},\mbox{ where }\tilde{v}_{i,l},v_{i,l}\in M_{l}^{\perp}\mbox{ for all }l\in\{1,\dots,k\}.
\end{eqnarray*}
Such a representation is not unique. This part of the proof is modified
from the treatment in \cite{EsRa11} of \cite{BD86}.
\begin{thm}
(Strong convergence of Algorithm \ref{alg:accel}: Version 2) Suppose
that in Algorithm \ref{alg:accel}, the additional conditions are
satisfied:
\begin{enumerate}
\item [(A)]There is a number $\bar{p}$ such that for all $l\in\{1,\dots,k\}$
and $i>0$, there is a $p_{i}^{l}\in[0,\bar{p}]$ such that $\tilde{x}_{i+p_{i}^{l}}=P_{M_{l}}(x_{i+p_{i}^{l}})$. 
\item [(B${}^{\prime}$)] There is a number $K$ such that \textrm{\textup{
\begin{equation}
\sum_{l=1}^{k}[\|\tilde{v}_{j,l}\|^{2}+\|v_{j,l}\|^{2}]\leq K[\|x_{i}-\tilde{x}_{i}\|^{2}+\|\tilde{x}_{i}-x_{i+1}\|^{2}]\mbox{ for all }i\geq0.\label{eq:B-prime}
\end{equation}
}}
\end{enumerate}
Then the iterates $\{x_{i}\}$ converge strongly to $P_{M}(x_{0})$. \end{thm}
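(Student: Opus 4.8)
The plan is to invoke Proposition~\ref{prop:4-condns}, applied --- exactly as in Remark~\ref{rem:condns-1-4} and the proof of Theorem~\ref{thm:conv-alg-1-ver-1} --- to the expanded sequence $x_0,\tilde x_0,x_1,\tilde x_1,x_2,\tilde x_2,\dots$, which I abbreviate $z_0,z_1,z_2,\dots$. By Remark~\ref{rem:condns-1-4} conditions (1) and (4) of Proposition~\ref{prop:4-condns} hold automatically, and condition (A) implies condition (3) by the identical bounded-telescoping argument of Step~1 of the proof of Theorem~\ref{thm:conv-alg-1-ver-1} (where Theorem~\ref{thm:1-attractive} is used to get $\sum_{i\ge0}[\|x_i-\tilde x_i\|^2+\|\tilde x_i-x_{i+1}\|^2]\le\|x_0\|^2<\infty$). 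So the whole burden is condition (2): that $\limsup\langle z_N-x_0,z_N\rangle\le0$ along some subsequence of indices $N$.

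Each step of the algorithm replaces the current point by its orthogonal projection onto a linear subspace --- $M_{l_i}$, with increment $x_i-\tilde x_i=\sum_l\tilde v_{i,l}$, or $\tilde H_i$, with increment $\tilde x_i-x_{i+1}=\sum_l v_{i,l}$ --- so the increment $z_n-z_{n+1}$ is orthogonal to $z_{n+1}$. Writing $z_0-z_N=\sum_{n<N}(z_n-z_{n+1})$ and using this orthogonality, $\langle z_0-z_N,z_N\rangle=\sum_{n<N}\langle z_n-z_{n+1},z_N\rangle$; now replace each increment by its $M_l^\perp$-components $\tilde v_{i,l},v_{i,l}$. For a component $w\in M_l^\perp$, condition (3) furnishes a point $\zeta_N^l\in M_l$ with $\|z_N-\zeta_N^l\|$ bounded by a sum of boundedly many recent increment norms, so $\langle w,z_N\rangle=\langle w,z_N-\zeta_N^l\rangle$; Cauchy--Schwarz then gives
\[
\bigl|\langle z_0-z_N,z_N\rangle\bigr|\le\Bigl(\max_l\|z_N-\zeta_N^l\|\Bigr)\sum_{i<N}\sum_{l=1}^k\bigl(\|\tilde v_{i,l}\|+\|v_{i,l}\|\bigr).
\]

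Hypothesis (B${}^{\prime}$) enters here, to make the right-hand side vanish along a subsequence. Combining \eqref{eq:B-prime} (read with $j=i$) with the square-summability of the increments $\|x_i-\tilde x_i\|,\|\tilde x_i-x_{i+1}\|$ shows the component norms $\{\|\tilde v_{i,l}\|,\|v_{i,l}\|\}$ are square summable, so the partial sum $\sum_{i<N}\sum_l(\|\tilde v_{i,l}\|+\|v_{i,l}\|)$ is $O(\sqrt N)$ by Cauchy--Schwarz; meanwhile $\|z_N-\zeta_N^l\|^2$ is dominated by a block of boundedly many squared increments, so $\sum_N\max_l\|z_N-\zeta_N^l\|^2<\infty$. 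By Lemma~\ref{lem:Sum-of-squares} (or simply the fact that a summable series of nonnegative terms $\gamma_N$ has $\liminf_N N\gamma_N=0$) there is a subsequence $N_t$ with $\sqrt{N_t}\,\max_l\|z_{N_t}-\zeta_{N_t}^l\|\to0$, hence $\langle z_0-z_{N_t},z_{N_t}\rangle\to0$. This is condition (2), and Proposition~\ref{prop:4-condns} then yields the strong convergence of $\{x_i\}$ to $P_M(x_0)$.

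I expect the main obstacle to be the bookkeeping in the last paragraph: one must choose the enumeration of the components $\tilde v_{i,l},v_{i,l}$ in algorithm order and the matching subsequence of expanded indices $N$ so that Lemma~\ref{lem:Sum-of-squares} applies to the correct square-summable sequence and controls precisely the product $(\max_l\|z_N-\zeta_N^l\|)\cdot\sum_{i<N}\sum_l(\|\tilde v_{i,l}\|+\|v_{i,l}\|)$ --- in particular aligning the ``nearby iterate'' gaps, which are sums of several increments, with the Lemma's partial sums. A secondary, notational point: \eqref{eq:B-prime} as typeset has index $j$ on the left and $i$ on the right and should be read with $j=i$, and one should confirm that the $M_l^\perp$-decomposition of the increments used above is exactly the $\tilde v_{i,l},v_{i,l}$ appearing in (B${}^{\prime}$).
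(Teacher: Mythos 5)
Your proposal is correct, and its skeleton is the paper's: reduce to Proposition \ref{prop:4-condns} on the expanded sequence, get conditions (1) and (4) from Remark \ref{rem:condns-1-4}, get condition (3) from (A) exactly as in Theorem \ref{thm:conv-alg-1-ver-1}, and attack condition (2) by telescoping $x_0-x_N$ into the $M_l^{\perp}$-components $\tilde v_{i,l},v_{i,l}$, pairing each against $x_N-\zeta_N^l$ with $\zeta_N^l\in M_l$ a nearby iterate supplied by (A), and using (B${}^{\prime}$) to make $\alpha_i:=\sum_l(\|\tilde v_{i,l}\|+\|v_{i,l}\|)$ square-summable. (You are also right that \eqref{eq:B-prime} must be read with $j=i$.) The one place you genuinely diverge is the extraction of the subsequence: the paper groups the $\alpha$'s into blocks $\beta_j:=\sum_{s=0}^{\bar p}\alpha_{j[\bar p+1]+s}$, bounds $\langle x_{j[\bar p+1]}-x_0,x_{j[\bar p+1]}\rangle\leq\sum_{s\leq j}\beta_s\beta_j$, and invokes Lemma \ref{lem:Sum-of-squares}; you instead bound $\sum_{i<N}\alpha_i=O(\sqrt N)$ by Cauchy--Schwarz and use the elementary fact that a summable nonnegative sequence $\gamma_N$ has $\liminf_N N\gamma_N=0$ applied to $\gamma_N=\max_l\|z_N-\zeta_N^l\|^2$ (which is summable since each $\alpha_j^2$ is reused in at most boundedly many blocks). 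Your route is slightly cleaner in that it sidesteps precisely the alignment issue you flag at the end --- Lemma \ref{lem:Sum-of-squares} as stated wants the product $a_{j_t}\sum_{s\leq j_t}a_s$, and matching the ``nearby iterate'' gap (a block of $\bar p+1$ consecutive $\alpha$'s) to the lemma's single term is what forces the paper's $\beta$-blocking; the $\liminf N\gamma_N=0$ argument absorbs the block directly. One cosmetic remark: the identity $\langle z_0-z_N,z_N\rangle=\sum_{n<N}\langle z_n-z_{n+1},z_N\rangle$ is pure telescoping and linearity; the orthogonality of each increment to the projected point is not needed there (it is needed only for the square-summability via Theorem \ref{thm:1-attractive}, which you use correctly).
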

\begin{proof}
Like in Theorem \ref{thm:conv-alg-1-ver-1}, we apply Proposition
\ref{prop:4-condns}. The sequence we apply Proposition \ref{prop:4-condns}
on is actually $x_{0},\tilde{x}_{0},x_{1},\tilde{x}_{1},x_{2},\tilde{x}_{2},\dots$
instead of $\{x_{i}\}$. The proof that condition (A) implies condition
(3) of Proposition \ref{prop:4-condns} is the same as that in Theorem
\ref{thm:conv-alg-1-ver-1}. We proceed with the rest of the proof.

\textbf{Step 1: Condition (2) of Proposition \ref{prop:4-condns}
holds.}

By using Theorem \ref{thm:1-attractive} repeatedly, we have
\begin{equation}
\sum_{i=0}^{\infty}[\|x_{i}-\tilde{x}_{i}\|^{2}+\|\tilde{x}_{i}-x_{i+1}\|^{2}]\leq\|x_{0}\|^{2}<\infty.\label{eq:sum-sq-finite}
\end{equation}
For $j\in\mathbb{N}_{0}$, define $\alpha_{j}\in\mathbb{R}$ to be
\[
\alpha_{j}:=\sum_{l=1}^{k}[\|\tilde{v}_{j,l}\|+\|v_{j,l}\|].
\]
By repeatedly using the inequality $2cd\leq c^{2}+d^{2}$ onto the
expansion of $\alpha_{j}^{2}$ and \eqref{eq:B-prime}, we have 
\[
\alpha_{j}^{2}\leq2k\sum_{l=1}^{k}[\|\tilde{v}_{j,l}\|^{2}+\|v_{j,l}\|^{2}]\leq2kK[\|x_{i}-\tilde{x}_{i}\|^{2}+\|\tilde{x}_{i}-x_{i+1}\|^{2}].
\]
In view of \eqref{eq:sum-sq-finite}, the sum $\sum_{j=0}^{\infty}\alpha_{j}^{2}$
is finite.

Next, we calculate the bounds on the inner product $\langle x_{i}-x_{0},x_{i}\rangle$.
By Condition (A), for each $l\in\{1,\dots,k\}$ and $i>0$, there
is some $p_{i}^{l}\in[0,\bar{p}]$ such that $\tilde{x}_{i+p_{i}^{l}}=P_{M_{l}}(x_{i+p_{i}^{l}})$,
from which we get $\tilde{x}_{i+p_{i}^{l}}\in M_{l}$. 

Since $x_{i}-x_{0}=\sum_{s=0}^{i}\sum_{l=1}^{k}[\tilde{v}_{s,l}+v_{s,l}]$
and $[\tilde{v}_{s,l}+v_{s,l}]\in M_{l}^{\perp}$, we have 
\[
\langle x_{i}-x_{0},x_{i}\rangle=\sum_{s=0}^{i-1}\sum_{l=1}^{k}\langle\tilde{v}_{s,l}+v_{s,l},x_{i}\rangle=\sum_{s=0}^{i-1}\sum_{l=1}^{k}\langle\tilde{v}_{s,l}+v_{s,l},x_{i}-\tilde{x}_{i+p_{i}^{l}}\rangle.
\]
Since 
\begin{eqnarray*}
\|x_{i}-\tilde{x}_{i+p_{i}^{l}}\| & \leq & \sum_{s=0}^{\bar{p}}[\|x_{i+s}-\tilde{x}_{i+s}\|+\|\tilde{x}_{i+s}-x_{i+s+1}\|]\\
 & \leq & \sum_{s=0}^{\bar{p}}\sum_{l=1}^{k}[\|\tilde{v}_{i+s}\|+\|v_{i+s}\|]\\
 & = & \sum_{s=0}^{\bar{p}}\alpha_{i+s},
\end{eqnarray*}
we continue the earlier calculations to get 
\[
\langle x_{i}-x_{0},x_{i}\rangle\leq\sum_{s=0}^{i}\underbrace{\sum_{l=1}^{k}[\|\tilde{v}_{s,l}\|+\|v_{s,l}\|]}_{=\alpha_{s}}\|x_{i}-\tilde{x}_{i+p_{i}^{l}}\|\leq\left[\sum_{s=0}^{i}\alpha_{s}\right]\left[\sum_{s=0}^{\bar{p}}\alpha_{i+s}\right].
\]
Define $\beta_{j}:=\sum_{s=0}^{\bar{p}}\alpha_{j[\bar{p}+1]+s}$.
The inequality above would imply 
\[
\langle x_{j[\bar{p}+1]}-x_{0},x_{j[\bar{p}+1]}\rangle\leq\sum_{s=0}^{j}\beta_{s}\beta_{j}.
\]
Since $\beta_{j}^{2}\leq[\bar{p}+1]\sum_{s=0}^{\bar{p}}\alpha_{j[\bar{p}+1]+s}^{2}$,
we see that $\sum_{j=1}^{\infty}\beta_{j}^{2}\leq[\bar{p}+1]\sum_{j=1}^{\infty}\alpha_{j}^{2}<\infty$.
By Lemma \ref{lem:Sum-of-squares}, we can find a subsequence $\{i_{t}\}$
such that $\limsup_{t\to\infty}\langle x_{i_{t}}-x_{0},x_{i_{t}}\rangle\leq0$,
which is exactly condition (2). Thus we are done.
\end{proof}
In the case of alternating projections, it is clear to see that condition
(B$^{\prime}$) is satisfied with $K=1$ because $\tilde{x}_{i}-x_{i+1}=0$
and $v_{i,l}=0$ for all $i\geq0$, and for each $i\geq0$, only one
of the $\tilde{v}_{i,l}$ among $l\in\{1,\dots,k\}$ equals to $x_{i}-\tilde{x}_{i}$,
and the rest of the $\tilde{v}_{i,l}$ are zero. 

We remark that the condition (B$^{\prime}$) can be checked once
we get the new iterate $\tilde{x}_{j}^{(i)}$. The value $K$ can
be chosen to be any finite value.

We now proceed to prove a strong convergence result of Algorithm \ref{alg:accel2}.
The proof is similar to that of Theorem \ref{thm:conv-alg-1-ver-1},
but we shall include the details for completeness.
\begin{thm}
\label{thm:conv-alg-2-ver-1} (Strong convergence of Algorithm \ref{alg:accel2})
Suppose that in Algorithm \ref{alg:accel2}, the additional conditions
are satisfied:
\begin{enumerate}
\item [(A)]There is a number $\bar{p}$ such that for all $l\in\{2,\dots,k\}$
and $i>0$, there is a $p_{i}^{l}\in[0,\bar{p}]$ such that $x_{i+p_{i}^{l}}^{\prime}=P_{M_{l}}(x_{i+p_{i}^{l}})$. 
\item [(B)]The hyperplanes $\tilde{H}_{i}$ are chosen such that $x_{0}-x_{i}\in\bspan(\{a_{j}:j\in J_{i}\})$
for all iterations $i$. 
\end{enumerate}
Then the sequence of iterates $\{x_{i}\}_{i}$ converges strongly
to $P_{M}(x_{0})$. \end{thm}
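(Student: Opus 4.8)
The plan is to follow the proof of Theorem \ref{thm:conv-alg-1-ver-1} essentially verbatim, applying Proposition \ref{prop:4-condns} to the \emph{interleaved} sequence $x_{0},x_{0}^{\prime},x_{0}^{\prime\prime},x_{1},x_{1}^{\prime},x_{1}^{\prime\prime},x_{2},\dots$ rather than to $\{x_{i}\}_{i}$ directly. By Remark \ref{rem:condns-1-4}, conditions (1) and (4) of Proposition \ref{prop:4-condns} are already available: Fej\'{e}r monotonicity with respect to $M$ follows from the chain \eqref{eq:nested-projs}, and $x_{0}-(\text{any iterate})\in M^{\perp}$ because the three steps $x_{i}\to x_{i}^{\prime}\to x_{i}^{\prime\prime}\to x_{i+1}$ subtract vectors lying in $M_{l_{i}}^{\perp}$, $M_{1}^{\perp}$, and $\tilde{H}_{i}^{\perp}$ respectively, all contained in $M^{\perp}$. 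It therefore suffices to verify conditions (2) and (3).

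\textbf{Condition (3).} First note that $0\in M\subset\tilde{H}_{i}$, so each $\tilde{H}_{i}$ is a closed linear subspace, and consequently every consecutive pair in the interleaved sequence is related by an orthogonal projection onto a closed linear subspace (namely $P_{M_{l_{i}}}$, $P_{M_{1}}$, or $P_{\tilde{H}_{i}}$). Applying the Pythagorean identity of Theorem \ref{thm:1-attractive} repeatedly and telescoping gives $\sum_{i=0}^{\infty}[\|x_{i}-x_{i}^{\prime}\|^{2}+\|x_{i}^{\prime}-x_{i}^{\prime\prime}\|^{2}+\|x_{i}^{\prime\prime}-x_{i+1}\|^{2}]\le\|x_{0}\|^{2}<\infty$, so all consecutive differences in the interleaved sequence tend to $0$. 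Now for $l\in\{2,\dots,k\}$, condition (A) supplies, for each $i$, an index $p_{i}^{l}\in[0,\bar{p}]$ with $x_{i+p_{i}^{l}}^{\prime}=P_{M_{l}}(x_{i+p_{i}^{l}})\in M_{l}$; and for $l=1$, every term $x_{i}^{\prime\prime}$ lies in $M_{1}$, as does every $x_{i}$ (established in the discussion following Algorithm \ref{alg:accel2}), so a look-ahead of at most two terms suffices. In all cases the look-ahead is uniformly bounded, so the distance from any term of the interleaved sequence to the nearest later term lying in $M_{l}$ is dominated by a fixed-length sum of consecutive differences, hence tends to $0$. This is condition (3).

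\textbf{Condition (2).} I will show the stronger statement $\langle x_{0}-x_{i},x_{i}\rangle=0$ for all $i>0$ by induction on $i$; this yields condition (2) along the subsequence of main iterates $\{x_{i}\}$. For $i=1$, $x_{1}=P_{\tilde{H}_{0}}(x_{0})$ with $\tilde{H}_{0}$ a linear subspace, so $\langle x_{0}-x_{1},x_{1}\rangle=0$. For the inductive step, assume $\langle x_{0}-x_{i^{*}},x_{i^{*}}\rangle=0$. By Step 2, $x_{i^{*}+1}=P_{\tilde{H}_{i^{*}}}(x_{i^{*}})$. Condition (B) gives $x_{0}-x_{i^{*}}\in\bspan(\{a_{j}:j\in J_{i^{*}}\})=\tilde{H}_{i^{*}}^{\perp}$, using $\tilde{H}_{i^{*}}^{\perp}=\sum_{j\in J_{i^{*}}}H_{j}^{\perp}$ and $H_{j}^{\perp}=\bspan(\{a_{j}\})$ from Proposition \ref{prop:2-subspaces}; hence $P_{\tilde{H}_{i^{*}}}(x_{0})=P_{\tilde{H}_{i^{*}}}(x_{i^{*}})=x_{i^{*}+1}$. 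Since $\tilde{H}_{i^{*}}$ is a linear subspace, the residual $x_{0}-x_{i^{*}+1}$ is orthogonal to $x_{i^{*}+1}\in\tilde{H}_{i^{*}}$, so $\langle x_{0}-x_{i^{*}+1},x_{i^{*}+1}\rangle=0$, completing the induction.

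With conditions (1)--(4) in place, Proposition \ref{prop:4-condns} shows the interleaved sequence converges strongly to $P_{M}(x_{0})$, and hence so does $\{x_{i}\}_{i}$. There is no substantive obstacle here beyond careful bookkeeping: the one thing to watch is the degenerate case $a_{i}=0$ (i.e.\ $x_{i}=x_{i}^{\prime\prime}$), in which $H_{i}=X$ and the look-ahead and orthogonality arguments hold trivially, and the need to track the threefold $x_{i},x_{i}^{\prime},x_{i}^{\prime\prime}$ indexing when translating condition (A) into the uniformly bounded look-ahead required by Proposition \ref{prop:4-condns}. This is why the statement is advertised as "similar to" Theorem \ref{thm:conv-alg-1-ver-1}.
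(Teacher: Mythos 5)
Your proposal is correct and follows essentially the same route as the paper: apply Proposition \ref{prop:4-condns} to the interleaved sequence, derive condition (3) from condition (A) via the telescoping Pythagorean bound, and derive condition (2) from condition (B) by inductively proving $\langle x_{0}-x_{i},x_{i}\rangle=0$. The only (harmless) differences are that you telescope three squared increments per iteration where the paper combines $x_{i}^{\prime}\to x_{i}^{\prime\prime}\to x_{i+1}$ into the single projection $P_{M_{1}\cap\tilde{H}_{i}}$, and in the induction you use $x_{i+1}=P_{\tilde{H}_{i}}(x_{i})$ directly instead of passing through $[M_{1}\cap\tilde{H}_{i}]^{\perp}$; you also handle the $l=1$ case of condition (3) explicitly, which the paper leaves implicit.
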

\begin{proof}
We apply Proposition \ref{prop:4-condns}. The sequence we apply Proposition
\ref{prop:4-condns} to is actually $x_{0},$ $x_{0}^{\prime},$ $x_{0}^{\prime\prime},$
$x_{1},$ $x_{1}^{\prime},$ $x_{1}^{\prime\prime},$ $x_{2},$ $x_{2}^{\prime},$
$x_{2}^{\prime\prime},$ $\dots$. instead of $\{x_{i}\}$. By Remark
\ref{rem:condns-1-4}, it suffices to check conditions (2) and (3)
of Proposition \ref{prop:4-condns}. The changes from the proof of
Theorem \ref{thm:conv-alg-1-ver-1} are minimal, but we still include
details for completeness.

\textbf{Step 1: Condition (A) implies Condition (3) of Proposition
\ref{prop:4-condns}.}

By condition (A), for any $i>0$ and $l\in\{2,\dots,k\}$, there exists
a $p_{i}^{l}\in[0,\bar{p}]$ such that $x_{i+p_{i}^{l}}^{\prime}\in M_{l}$.
Note that $x_{i+1}=P_{M_{1}\cap\tilde{H}_{i}}(x_{i}^{\prime})$ by
Proposition \ref{prop:proj-intersection-of-2}. By using Theorem \ref{thm:1-attractive}
repeatedly, we have 
\[
\sum_{i=0}^{\infty}[\|x_{i}-x_{i}^{\prime}\|^{2}+\|x_{i}^{\prime}-x_{i+1}\|^{2}]\leq\|x_{0}\|^{2}<\infty.
\]
Therefore the sequence $\|x_{0}-x_{0}^{\prime}\|,$ $\|x_{0}^{\prime}-x_{1}\|,$
$\|x_{1}-x_{1}^{\prime}\|,$ $\dots$ converges to zero. Since 
\[
\|x_{i}-x_{i+p_{i}^{l}}^{\prime}\|\leq\|x_{i+p_{i}^{l}}-x_{i+p_{i}^{l}}^{\prime}\|+\sum_{j=0}^{\bar{p}}[\|x_{i+j}-x_{i+j}^{\prime}\|+\|x_{i+j}^{\prime}-x_{i+j+1}\|],
\]
it is clear that $\|x_{i}-x_{i+p_{i}^{l}}^{\prime}\|\to0$ as $i\to\infty$.
Thus condition (3) holds.

\textbf{Step 2: Condition (B) implies Condition (2) of Proposition
\ref{prop:4-condns}. }

We prove 
\begin{equation}
\langle x_{0}-x_{i},x_{i}\rangle=0\mbox{ for all }i>0,\label{eq:x0-xi-xi-condn-1}
\end{equation}
which clearly implies Condition (2). We use standard induction. It
is easy to check that formula \eqref{eq:x0-xi-xi-condn-1} holds for
$i=1$. Suppose it holds for $i=i^{*}$. We want to show that it holds
for $i=i^{*}+1$. We have $x_{i^{*}+1}=P_{M_{1}\cap\tilde{H}_{i^{*}}}(x_{i^{*}}^{\prime})$,
or equivalently, 
\[
x_{i^{*}+1}\in x_{i^{*}}^{\prime}+[M_{1}\cap\tilde{H}_{i^{*}}]^{\perp}.
\]
Since $x_{i^{*}}^{\prime\prime}=P_{M_{1}}(x_{i^{*}}^{\prime})$, we
have $x_{i^{*}}^{\prime}-x_{i^{*}}^{\prime\prime}\in M_{1}^{\perp}\subset[M_{1}\cap\tilde{H}_{i^{*}}]^{\perp}$.
Next, since $i^{*}\in J_{i^{*}}$, we have $x_{i^{*}}-x_{i^{*}}^{\prime\prime}\in\tilde{H}_{i^{*}}^{\perp}\subset[M_{1}\cap\tilde{H}_{i^{*}}]^{\perp}$,
Thus 
\[
x_{i^{*}+1}\in x_{i^{*}}+[M_{1}\cap\tilde{H}_{i^{*}}]^{\perp},
\]
or $x_{i^{*}+1}=P_{M\cap\tilde{H}_{i^{*}}}(x_{i^{*}})$. Since $x_{0}-x_{i^{*}}\in\bspan(\{a_{j}:j\in J_{i^{*}}\})$,
we have 
\[
x_{0}-x_{i^{*}}\in\tilde{H}_{i^{*}}^{\perp}\subset[M_{1}\cap\tilde{H}_{i^{*}}]^{\perp},
\]
so $x_{i^{*}+1}=P_{M_{1}\cap\tilde{H}_{i^{*}}}(x_{0})$ using a similar
argument. Since $0\in M_{1}\cap\tilde{H}_{i^{*}}$, we can deduce
\eqref{eq:x0-xi-xi-condn-1}, ending our proof by induction.
\end{proof}
It is clear that some variant of condition (A) is necessary so that
we project onto each set $M_{l}$ infinitely often, otherwise we may
converge to some point outside $M$. We now give our reasons to show
that it will be hard to prove the result if conditions (A) and (B)
were dropped. 
\begin{example}
\label{exa:p-needed}(Difficulties in dropping conditions in strong
convergence theorems) Consider the case when $k=2$. The linear operator
$P_{M_{2}}P_{M_{1}}(\cdot)$ is nonexpansive. But $M_{1}^{\perp}+M_{2}^{\perp}$
is a closed subspace if and only if $\|P_{M_{2}}P_{M_{1}}P_{M^{\perp}}\|<1$
\cite{Bauschke-Borwein-Lewis97}. We look at the case when 
\begin{equation}
\|P_{M_{2}}P_{M_{1}}P_{M^{\perp}}\|=1.\label{eq:norm-eq-1}
\end{equation}
The hyperplanes $\tilde{H}_{i}$ considered in the algorithm satisfy
$0\in\tilde{H}_{i}$. Suppose that this is the condition imposed on
the $\tilde{H}_{i}$ rather than $\tilde{H}_{i}$ being the intersection
of hyperplanes found by previous iterations. We refer to Figure \ref{fig:proj}.
The points $x_{i_{1}}$ and $x_{i_{2}}$, where $i_{1}<i_{2}$, are
iterates of Algorithm \ref{alg:accel}, and $x_{i_{2}}$ is obtained
after projecting consecutively onto four subspaces from $x_{i_{1}}$.
This arises when a third subspace $M_{3}$ is the Hilbert space $X$
and we project onto different hyperplanes passing through $0$ after
projecting onto $M_{3}$. We now show that it is possible for the
iterates $x_{i_{1}}$ and $x_{i_{2}}$ to be such that $\frac{\|x_{i_{2}}\|}{\|x_{i_{1}}\|}$
is arbitrarily close to 1. Suppose the angle $\angle x_{i_{1}}0x_{i_{2}}$
is $\theta$. If $x_{i_{2}}$ is obtained by projecting consecutively
onto $k$ subspaces, where consecutive subspaces are at an angle of
$\theta/k$. We can use Theorem \ref{thm:1-attractive} and some elementary
geometry to bound $\|x_{i_{2}}\|$ by 
\[
\|x_{i_{1}}\|^{2}\left[1-k\left[\sin\frac{\theta}{k}\right]^{2}\right]\leq\|x_{i_{2}}\|^{2}\leq\|x_{i_{1}}\|^{2}.
\]
Some simple trigonometry gives us $\lim_{k\to\infty}k[\sin\frac{\theta}{k}]^{2}=0$.
This would imply that $\frac{\|x_{i_{2}}\|}{\|x_{i_{1}}\|}$ can be
arbitrarily close to $1$ if we allow for projections onto arbitrarily
large number of subspaces containing $M$ as claimed. Combining this
fact together with \eqref{eq:norm-eq-1}, we cannot rule out that
(by our method of proof at least) it is possible that the iterates
$x_{i}$ may not even converge to $P_{M}(x_{0})$.

\begin{figure}[h]
\includegraphics[scale=0.5]{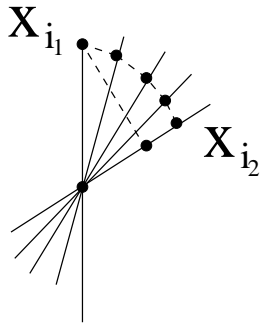}\caption{\label{fig:proj}The iterate $x_{i_{2}}$ is obtained from $x_{i_{1}}$
after projecting onto 4 linear subspaces. Note that $\|x_{i_{2}}\|$
is larger than the norm of the projection of $x_{i_{1}}$ onto the
final subspace. }

\end{figure}
\end{example}
\begin{rem}
(Connection to Dykstra's algorithm) Dykstra's algorithm \cite{Dykstra83}
is an algorithm to find the projection of a point onto the intersection
of finitely many closed convex sets (not necessarily affine subspaces).
The difference between Dykstra's algorithm and the method of alternating
projections is the additional correction vectors in Dykstra's algorithm.
Readers familiar with Dykstra's algorithm will know that in the case
of finitely many affine subspaces, Dykstra's algorithm reduces to
the method of alternating projections. The Boyle-Dykstra Theorem \cite{BD86}
proves the correctness of Dykstra's algorithm, and we have used ideas
in \cite{BD86} for our proof. A reason why we only analyze the problem
of accelerating alternating projections in the case of finitely many
affine spaces and not the more general setting of accelerating Dykstra's
algorithm is that we feel that the idea of using supporting halfspaces
and quadratic programming as explained in \cite{cut_Pang12} will
be more effective than Dykstra's algorithm in general.
\end{rem}
Finally, we remark that a consequence of our strong convergence theorems
is that strong convergence is guaranteed even when the projection
order is not cyclic. These observations have already been made in
\cite{Hundal-Deutsch-97} when they were analyzing the more general
Dykstra's algorithm.

\section{\label{sec:experiments}Performance of acceleration}

In this section, we consider a Matrix Model Updating Problem (MMUP)
as presented in \cite[Section 6.2]{EsRa11}, who in turn cited \cite{Datta_Sarkissian01,Moreno_Datta_Raydan09},
and show how one can use Algorithm \ref{alg:accel} to solve the problem.
We also show the numerical performance of our acceleration.

The problem of interest is as follows. For $M,D,K\in\mathbb{R}^{n\times n}$,
we want to solve\begin{subequations}\label{eq:MMUP} 
\begin{eqnarray}
 & \underset{\scriptsize\tilde{K},\tilde{D}\in\mathbb{R}^{n\times n}}{\min} & \|K-\tilde{K}\|_{F}^{2}+\|D-\tilde{D}\|_{F}^{2}\label{eq:MMUP1}\\
 & \mbox{s.t.} & \tilde{K}=\tilde{K}^{T},\mbox{ }\tilde{D}=\tilde{D}^{T},\label{eq:MMUP2}\\
 &  & MY_{1}(\Lambda_{1}^{*})^{2}+\tilde{D}Y_{1}(\Lambda_{1}^{*})+\tilde{K}Y_{1}=0,\label{eq:MMUP3}
\end{eqnarray}
\end{subequations}where $\Lambda_{1}^{*}\in\mathbb{C}^{p\times p}$
and $\Lambda_{1}^{*}=\mbox{diag}(\mu_{1},\dots,\mu_{p})$ and $Y_{1}\in\mathbb{C}^{n\times p}$
with columns $y_{1},\dots,y_{p}$ are the matrices of the desired
eigenvalues $\{\mu_{i}\}_{i=1}^{p}$ and eigenvectors $\{y_{i}\}_{i=1}^{p}$.
Problem \eqref{eq:MMUP} arises when we want to find minimal perturbations
in $K$ and $D$ so that some undesirable eigenvalues are moved to
more desirable values.

We can transform \eqref{eq:MMUP} as follows. We start by writing
\eqref{eq:MMUP3} as 
\[
A+\tilde{D}B+\tilde{K}C=0,
\]
where $A,B,C\in\mathbb{C}^{n\times p}$ are 
\begin{equation}
A=MY_{1}(\Lambda_{1}^{*})^{2}\mbox{, }B=Y_{1}(\Lambda_{1}^{*})\mbox{ and }C=Y_{1}.\label{eq:ABC}
\end{equation}
We can now write \eqref{eq:MMUP1} as a function of only one $2n\times2n$
block matrix variable. Define the matrices $X_{0}\in\mathbb{R}^{2n\times2n}$
and $\tilde{X}\in\mathbb{R}^{2n\times2n}$ by 
\[
X_{0}=\left(\begin{array}{cc}
K & 0\\
0 & D
\end{array}\right)\mbox{ and }\tilde{X}=\left(\begin{array}{cc}
\tilde{K} & 0\\
0 & \tilde{D}
\end{array}\right).
\]
We now write \eqref{eq:MMUP3} in terms of $\tilde{X}$. Define the
block matrices $W$ and $\hat{I}$ as 
\[
\hat{I}:=\left({I_{n\times n}\atop I_{n\times n}}\right)\mbox{ and }W=\left({C\atop B}\right),
\]
where $I_{n\times n}\in\mathbb{R}^{n\times n}$ is the identity matrix.
Note that 
\begin{eqnarray*}
A+\hat{I}^{T}\tilde{X}W & = & A+\left(\begin{array}{cc}
I_{n\times n} & I_{n\times n}\end{array}\right)\left(\begin{array}{cc}
\tilde{K} & 0\\
0 & \tilde{D}
\end{array}\right)\left(\begin{array}{c}
C\\
B
\end{array}\right)\\
 & = & A+\tilde{K}C+\tilde{D}B\\
 & = & A+\tilde{X}_{22}B+\tilde{X}_{11}C.
\end{eqnarray*}
Then problem \eqref{eq:MMUP} is reduced to that of finding the matrix
$\tilde{X}$ that solves the following optimization problem\begin{subequations}\label{eq:MMUP-v2}
\begin{eqnarray}
 & \underset{\scriptsize\tilde{X}\in\mathbb{R}^{2n\times2n}}{\min} & \|X_{0}-\tilde{X}\|_{F}^{2}\label{eq:MMUP-v2-1}\\
 & \mbox{s.t. } & \tilde{X}=\tilde{X}^{T}\mbox{, }\tilde{X}_{21}=\tilde{X}_{12}=0\label{eq:MMUP-v2-2}\\
 &  & A+\hat{I}^{T}\tilde{X}W=0.\label{eq:MMUP-v2-3}
\end{eqnarray}
\end{subequations}The projection of a matrix $X$ onto the set $S$
of matrices satisfying the first constraint \eqref{eq:MMUP-v2-2}
is given by 
\begin{eqnarray*}
 &  & [P_{S}(X)]_{12}=[P_{S}(X)]_{21}=0,\\
 &  & [P_{S}(X)]_{11}=\frac{1}{2}[X_{11}+X_{11}^{T}],\\
 & \mbox{ and } & [P_{S}(X)]_{22}=\frac{1}{2}[X_{22}+X_{22}^{T}].
\end{eqnarray*}
For the second constraint, we need to project onto the linear variety
\begin{equation}
V:=\{X\in\mathbb{R}^{2n\times2n}:A+\hat{I}XW=0\}.\label{eq:subspace-V}
\end{equation}

\begin{thm}
\cite{Moreno_Datta_Raydan09} If $X\in\mathbb{R}^{2n\times2n}$ is
any given matrix, then the projection onto the linear variety $V$
is given by 
\begin{eqnarray*}
 &  & P_{V}(X)=X+Z\Sigma W^{T},\\
 & \mbox{where } & \Sigma^{T}=-\frac{1}{2}[W^{T}W]^{-1}(A^{T}+W^{T}X^{T}Z).
\end{eqnarray*}

\end{thm}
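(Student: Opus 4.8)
The plan is to use the standard description of the orthogonal projection onto an affine subspace of the finite-dimensional Hilbert space $\mathbb{R}^{2n\times 2n}$ endowed with the Frobenius inner product $\langle X,Y\rangle=\mathrm{tr}(X^TY)$, which is the inner product underlying $\|\cdot\|_F$ in \eqref{eq:MMUP-v2-1}. Set $Z:=\hat I$ and write the defining constraint of $V$ as $\mathcal{L}(X)=-A$, where $\mathcal{L}\colon\mathbb{R}^{2n\times 2n}\to\mathbb{R}^{n\times p}$ is the linear map $\mathcal{L}(X)=Z^TXW$. The direction (parallel) subspace of the affine set $V$ is $V_0:=\ker\mathcal{L}$, and the best approximation $P_V(X)$ is characterized as the unique $Y\in V$ with $X-Y\in V_0^{\perp}=\mathrm{range}(\mathcal{L}^{*})$.

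First I would compute the adjoint $\mathcal{L}^{*}\colon\mathbb{R}^{n\times p}\to\mathbb{R}^{2n\times 2n}$. For any $\Sigma\in\mathbb{R}^{n\times p}$, cycling inside the trace gives $\langle\mathcal{L}(X),\Sigma\rangle=\mathrm{tr}(W^TX^TZ\Sigma)=\mathrm{tr}(X^TZ\Sigma W^T)=\langle X,Z\Sigma W^T\rangle$, so $\mathcal{L}^{*}(\Sigma)=Z\Sigma W^T$. Consequently $V_0^{\perp}=\{Z\Sigma W^T:\Sigma\in\mathbb{R}^{n\times p}\}$, which already forces $P_V(X)=X+Z\Sigma W^T$ for some matrix $\Sigma$; it remains only to determine $\Sigma$ from the requirement that this $Y$ lie in $V$.

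Next I would substitute $Y=X+Z\Sigma W^T$ into $\mathcal{L}(Y)=-A$, obtaining $Z^TXW+(Z^TZ)\Sigma(W^TW)=-A$. Using $Z^TZ=\hat I^T\hat I=2I_{n\times n}$ and the invertibility of $W^TW$ (which holds precisely when $W=\left({C\atop B}\right)$ has full column rank $p$), one solves $\Sigma=-\tfrac12(A+Z^TXW)(W^TW)^{-1}$; transposing and using that $W^TW$ is symmetric together with $(Z^TXW)^T=W^TX^TZ$ yields exactly the displayed formula $\Sigma^T=-\tfrac12[W^TW]^{-1}(A^T+W^TX^TZ)$. Since $\Sigma$ is uniquely determined, the matrix $Y=X+Z\Sigma W^T$ is the unique element of $V$ with $X-Y\in\mathrm{range}(\mathcal{L}^{*})=V_0^{\perp}$, hence $Y=P_V(X)$.

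The computation is elementary; the only genuinely delicate point is the nondegeneracy hypothesis that $W$ has full column rank, equivalently that $W^TW$ is invertible, which is needed both for $V$ to be an honest (nonempty) affine subspace on which the formula lives and for the expression $[W^TW]^{-1}$ to make sense. I would flag this as a standing assumption on the data $(B,C)$ (it is generic); a more careful treatment would instead replace $(W^TW)^{-1}$ by the Moore--Penrose pseudoinverse and verify solvability of $\mathcal{L}(Y)=-A$, but to align with \cite{Moreno_Datta_Raydan09} I would keep the full-rank assumption.
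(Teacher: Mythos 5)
Your derivation is correct and complete; the paper itself offers no proof of this statement, since it is quoted verbatim from \cite{Moreno_Datta_Raydan09}, so there is nothing internal to compare against. Your route --- computing the adjoint $\mathcal{L}^{*}(\Sigma)=Z\Sigma W^{T}$ of $\mathcal{L}(X)=Z^{T}XW$ under the Frobenius inner product, invoking the characterization $P_{V}(X)\in V$ with $X-P_{V}(X)\in V_{0}^{\perp}=\mathrm{range}(\mathcal{L}^{*})$, and then solving the normal equation using $Z^{T}Z=\hat{I}^{T}\hat{I}=2I_{n\times n}$ --- is the standard argument and checks out line by line, including the transposition step that uses the symmetry of $W^{T}W$. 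Two reading points you handled correctly but that are worth flagging explicitly: the symbol $Z$ is never defined in this paper and must be taken to be $\hat{I}$ (the factor $\tfrac12$ in the formula only comes out right with $Z^{T}Z=2I$), and the display \eqref{eq:subspace-V} contains a dimensional typo ($\hat{I}XW$ should be $\hat{I}^{T}XW$, consistent with \eqref{eq:MMUP-v2-3}); your standing full-column-rank assumption on $W$ is indeed needed for $[W^{T}W]^{-1}$ to exist and for $V$ to be nonempty, and is implicit in the cited source.
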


\subsection{Numerical experiments}

We consider two algorithms for solving the MMUP problem \eqref{eq:MMUP-v2}.
In the first algorithm, we make specific choices on step 1.
\begin{algorithm}
\label{alg:MMUP-alg-1}(MMUP algorithm 1) For a starting matrix $X_{0}$,
we wish to solve \eqref{eq:MMUP-v2}. We apply Algorithm \ref{alg:accel}
by choosing the first affine space to be $S$ and the second affine
space to be $V$. We project onto $S$ and $V$ alternately, starting
with $S$. Choose $q$ to be a positive integer. The affine space
$\tilde{H}_{i}$ is chosen to be the intersection of the last $q$
affine spaces identified, or all of the affine spaces if less than
$q$ affine spaces were identified. 
\end{algorithm}
It is clear that $q=1$ corresponds to the alternating projection
algorithm.

We now describe a second algorithm for the MMUP.
\begin{algorithm}
\label{alg:MMUP-alg-2}(MMUP Algorithm 2) For a starting matrix $X_{0}\in S$,
we wish to solve \eqref{eq:MMUP-v2}. We apply Algorithm \ref{alg:accel2}
by choosing the first affine space to be $S$ and the second affine
space to be $V$. We choose $S$ to play the role of $M_{1}$ in Algorithm
\ref{alg:accel2}. Choose $q$ to be a positive integer. The affine
space $\tilde{H}_{i}$ is chosen to be the intersection of the last
$q$ affine spaces identified, or all of the affine spaces if less
than $q$ affine spaces were identified. 
\end{algorithm}
One can see that the choice of $\tilde{H}_{i}$ in Algorithms \ref{alg:MMUP-alg-1}
and \ref{alg:MMUP-alg-2} do not satisfy condition (B). Nevertheless,
if the iterates do converge, we can show that limit points must be
of the form $[x_{0}+M^{\perp}]\cap M$, and the only point satisfying
this property is $P_{M}(x_{0})$. We still obtain desirable numerical
results in our experiments.
\begin{rem}
(Sparsity in Algorithm \ref{alg:MMUP-alg-2}) Note that the iterates
$X_{i}$ and $a_{i}$, the normal vectors of the halfspaces produced,
have to lie in the space $S$, which is sparse. Besides the ease of
projection onto $S$ and the large codimension of $S$, the sparsity
of iterates and normals is another reason why Algorithm \ref{alg:MMUP-alg-2}
performs better than Algorithm \ref{alg:MMUP-alg-1}.
\begin{rem}
(The case of $q=\infty$) In our problem, the two affine spaces $S$
and $V$ are both determined by finitely many equations. We can define
both Algorithms \ref{alg:MMUP-alg-1} and \ref{alg:MMUP-alg-2} by
setting the parameter $q$ to be $\infty$. What this means is that
we project onto the affine space produced by intersecting all previous
hyperplanes generated in earlier iterations. We can converge in finitely
many iterations for both algorithms once we identify all the equations
defining the two subspaces, but the computational costs for solving
the resulting system can be huge. (The reason why the alternating
projection method is preferable is that the cost per iteration is
small.)
\end{rem}
\end{rem}
We now perform our experiments on two problems presented in \cite[Section 6.2]{EsRa11}.

\subsubsection{\label{sub:Experiment-1}Experiment 1}

For our first experiment, we choose $M,D,K\in\mathbb{R}^{4\times4}$
to be the symmetric positive definite matrices as described in \cite{Datta_Sarkissian01}:
\begin{eqnarray*}
M & = & \left(\begin{array}{cccc}
1.4685 & 0.7177 & 0.4757 & 0.4311\\
0.7177 & 2.6938 & 1.2660 & 0.9676\\
0.4757 & 1.2660 & 2.7061 & 1.3948\\
0.4311 & 0.9676 & 1.3918 & 2.1876
\end{array}\right)\mbox{, }\\
D & = & \left(\begin{array}{cccc}
1.3525 & 1.2695 & 0.7967 & 0.8160\\
1.2695 & 1.3274 & 0.9144 & 0.7325\\
0.7967 & 0.9144 & 0.9456 & 0.8310\\
0.8160 & 0.7325 & 0.8310 & 1.1536
\end{array}\right)\mbox{, }\\
K & = & \left(\begin{array}{cccc}
1.7824 & 0.0076 & -0.1359 & -0.7290\\
0.0076 & 1.0287 & -0.0101 & -0.0493\\
-0.1359 & -0.0101 & 2.8360 & -0.2564\\
-0.7290 & -0.0493 & -0.2564 & 1.9130
\end{array}\right).
\end{eqnarray*}
The eigenvalues of $P(\lambda)=\lambda^{2}M+\lambda D+K$ computed
via MATLAB are $-0.0861\pm1.6242i$, $-0.1022\pm0.8876i$, $-0.1748\pm1.1922i$
and $-0.4480\pm0.2465i$. We want to reassign only the most unstable
pair of eigenvalues, namely $-0.0861\pm1.6242i$, to the locations
$-0.1\pm1.6242i$. Let the matrix of eigenvectors to be assigned be
\[
\left(\begin{array}{cc}
1.0000 & 1.0000\\
0.0535+0.3834i & 0.0535-0.3834i\\
0.5297+0.0668i & 0.5297-0.0668i\\
0.6711+0.4175i & 0.6711-0.4175i
\end{array}\right).
\]
The formulas for $A$, $\hat{I}$ and $W$ can work in principle,
but we decide to use a different strategy when the targeted eigenvalues
and eigenvectors are complex conjugates. Consider the targeted eigenvalue
$\mu_{1}=-0.1+1.6242i$ and its targeted eigenvector 
\[
y_{1}=(\begin{array}{cccc}
1.0000 & 0.0535+0.3834i & 0.5297+0.0668i & 0.6711+0.4175i\end{array})^{T}.
\]
Instead of choosing $A$, $B$ and $C$ in the manner of \eqref{eq:ABC},
we choose $A,B,C\in\mathbb{R}^{4\times2}$ to be 
\begin{eqnarray*}
A & = & M[\begin{array}{cc}
\mbox{Re}(y_{1}\mu_{1}^{2}) & \mbox{Im}(y_{1}\mu_{1}^{2})\end{array}],\\
B & = & \phantom{M}[\begin{array}{cc}
\mbox{Re}(y_{1}\mu_{1}) & \mbox{Im}(y_{1}\mu_{1})\end{array}].\\
\mbox{ and }C & = & \phantom{M}[\begin{array}{cc}
\mbox{Re}(y_{1}\phantom{\mu_{1}^{2}}) & \mbox{Im}(y_{1}\phantom{\mu_{1}^{2}})\end{array}].
\end{eqnarray*}

We illustrate the results of this experiment in Figure \ref{fig:test}.
The experiments show that the effectiveness of Algorithm \ref{alg:MMUP-alg-1}
and Algorithm \ref{alg:MMUP-alg-2}.

\begin{figure}[h]
\includegraphics[scale=0.43]{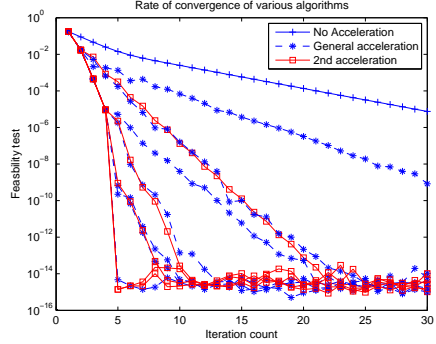}

\caption{\label{fig:test}This figure shows the results for the experiment
described in Subsubsection \ref{sub:Experiment-1}. We plot $\|A+\hat{I}^{T}\tilde{X}W\|$,
the distance of the iterates to $V$, against the number of times
we project onto $V$ \eqref{eq:subspace-V} for three different algorithms,
namely the method of alternating projections, Algorithm \ref{alg:MMUP-alg-1}
(General acceleration) and Algorithm \ref{alg:MMUP-alg-2} (2nd acceleration).
For Algorithm \ref{alg:MMUP-alg-1}, we test with parameters $q=2,3,4,5,6,7,8$.
(Note that $q=1$ corresponds to the alternating projection algorithm).
For Algorithm \ref{alg:MMUP-alg-2}, we test with parameters $q=1,2,3,4,5$.}

\end{figure}

\subsubsection{Experiment 2}

We repeat the experiment in \cite{Moreno_Datta_Raydan09} for the
case when $M,D,K\in\mathbb{R}^{30\times30}$ are the matrices {\small 
\begin{eqnarray*}
 &  & M=D=4I_{30\times30}=\left(\begin{array}{cccccc}
4 & 0 & 0 & \cdots & 0 & 0\\
0 & 4 & 0 & \cdots & 0 & 0\\
0 & 0 & 4 & \cdots & 0 & 0\\
\vdots & \vdots & \vdots & \ddots & \vdots & \vdots\\
0 & 0 & 0 & \cdots & 4 & 0\\
0 & 0 & 0 & \cdots & 0 & 4
\end{array}\right)\\
 & \mbox{ and } & K=\left(\begin{array}{cccccc}
1 & -1 & 0 & \cdots & 0 & 0\\
-1 & 2 & -1 & \cdots & 0 & 0\\
0 & -1 & 2 & \cdots & 0 & 0\\
\vdots & \vdots & \ddots & \ddots & \ddots & \vdots\\
0 & 0 & \cdots & -1 & 2 & -1\\
0 & 0 & \cdots & 0 & -1 & 1
\end{array}\right).
\end{eqnarray*}
}The pencil $P(\lambda)=\lambda^{2}M+\lambda D+K$ has 60 eigenvalues,
but the eigenvalue that causes the instability is $0$ with eigenvector
$\frac{1}{\sqrt{30}}(1,1,\dots,1)^{T}$, and the rest of the spectrum
of $P(\lambda)$ is below $-0.0027$. We use $Y_{1}=\frac{1}{\sqrt{30}}(1,1,\dots,1)^{T}$
with targeted eigenvalue $-0.018$.

Our experiments indicate that in one iteration of both Algorithms
\ref{alg:MMUP-alg-1} and \ref{alg:MMUP-alg-2}, the norm $\|A+\hat{I}^{T}\tilde{X}W\|$
goes down by a factor of $2.4\times10^{-14}$, essentially reaching
convergence within the numerical limits. For the alternating projection
algorithm, the decrease is linear, and each iteration reduces the
norm $\|A+\hat{I}^{T}\tilde{X}W\|$ by a factor of $0.5$. This experiment
once again illustrates the efficiency of the accelerations in Algorithms
\ref{alg:MMUP-alg-1} and \ref{alg:MMUP-alg-2}.

\section{Conclusion}

In this paper, we propose acceleration methods for projecting onto
the intersection of finitely many affine spaces. This strategy can
be applied to general feasibility problems where not only affine spaces
are involved, as long as there is more than one affine space.
\begin{acknowledgement*}
The author acknowledges very helpful conversations with 
Lim Chuan Li which led to Propositions \ref{prop:proj-intersection-of-2}
and \ref{prop:2-subspaces} and Algorithm \ref{alg:accel2}. He is grateful to his employer, the National University of Singapore, for his startup grant.
\end{acknowledgement*}
\bibliographystyle{amsalpha}
\bibliography{refs}

\end{document}